\newtheorem{theorem}{Theorem}[section]
\newtheorem{lemma}[theorem]{Lemma}
\newtheorem{proposition}[theorem]{Proposition}
\newtheorem{definition}[theorem]{Definition}
\theoremstyle{definition}
\newtheorem{remark}[theorem]{Remark}
\title[Beurling-Lax type theorems]{Beurling-Lax type theorems in the complex and quaternionic setting: the half-space case}
\author[D. Alpay]{Daniel Alpay}
\address{(DA) Department of Mathematics\\
Ben-Gurion University of the Negev\\
Beer-Sheva 84105 Israel}
\email{dany@math.bgu.ac.il}
\author[I. Sabadini]{Irene Sabadini}
\address{(IS) Politecnico di
Milano\\Dipartimento di Matematica\\Via E. Bonardi, 9\\20133 Milano\\Italy}
\email{irene.sabadini@polimi.it}
\begin{document}
\maketitle
\begin{abstract}
We give a generalization of the Beurling-Lax theorem both in the complex and quaternionic settings. We consider in the first case
functions meromorphic in the right complex half-plane, and functions slice hypermeromorphic in the right quaternionic half-space in the second case. In both settings we also discuss a unified framework, which includes both the disk and the half-plane for the complex case and the
open unit ball and the half-space in the quaternionic setting.
\end{abstract}

\noindent AMS Classification: 47A56, 47B32, 30G35.

\noindent {\em Key words}: Beurling-Lax theorem, analytic functions in the unit disk, in the half-plane, slice hyperholomorphic functions in the quaternionic unit ball, in the quaternionic half space, de Branges Rovnyak spaces.
\date{today}
\tableofcontents
\section{Introduction}
\setcounter{equation}{0}

This paper mainly deals with a Beurling-Lax theorem for vector-valued functions meromorphic in the right open half-plane $\mathbb C_r$,
and slice hypermeromorphic in the right half-space $\mathbb H_+$ in the quaternionic setting. For $\alpha\in\mathbb C$ we denote by $R_\alpha$ the resolvent-like operator
\[
R_\alpha f(z)=\begin{cases}\,\, \dfrac{f(z)-f(\alpha)}{z-\alpha},\,\,\, z\not=\alpha,\\
\,\, f^\prime(\alpha),\,\,\,\,\,\,\,\quad\hspace{0.6cm} z=\alpha,\end{cases}
\]
where the (possibly vector-valued) function $f$ is analytic in a neighborhood of $\alpha$. The name comes from the resolvent identity
\begin{equation}
\label{resoid}
R_\alpha-R_\beta=(\alpha-\beta)R_\alpha R_\beta, \qquad \forall\alpha, \beta\in\mathbb C
\end{equation}
which they satisfy, and which we use in the sequel.\\

It is useful to remark that $R_0^*=M_z$ (the operator of multiplication by $z$) in the Hardy space $\mathbf H_2(\mathbb D)$ of the open unit disk $\mathbb D$ and that
\[
R_\alpha^*=(M_z+\overline{\alpha} I)^{-1},\quad \alpha\in\mathbb C_r,
\]
in the Hardy space $\mathbf H_2(\mathbb C_r)$ of the right half-plane (for the proof of this fact, one makes use of computations similar to those in the proof of Lemma \ref{Lemma27}).\smallskip

The Beurling-Lax theorem gives a characterization of the $M_z$-invariant subspaces of the Hardy space, see \cite{duren,MR92d:46066} and
\cite{laxphi} for the vector-valued case. In the scalar case they are spaces of the form $j\mathbf H_2(\mathbb D)$ in the disk case and $j\mathbf H_2(\mathbb C_r)$ in the half-plane case, where $j$ is an inner function (meaning that the operator $M_j$ of multiplication by $j$ is an isometry from the corresponding Hardy space into itself). The orthogonal complement of such a space is therefore $R_\alpha$-invariant (for appropriate choices of $\alpha$) and has reproducing kernel
\begin{equation}
\frac{1-j(z)\overline{j(w)}}{1-z\overline{w}}\,\,\text{(disk case)}\,\,\,\,\text{or}\,\,\,\,
\frac{1-j(z)\overline{j(w)}}{2\pi(z+\overline{w})}\,\,\text{(half-plane case)}.
\end{equation}
These functions are positive definite (in the open unit disk and the open right half-plane respectively) when $j$ is assumed analytic and contractive in $\mathbb D$ (respectively in $\mathbb C_r$), but not necessarily inner. Then
the corresponding reproducing kernel Hilbert space is not included isometrically, but only contractively, inside the underlying Hardy space.
\smallskip

One of the purposes of this work is to characterize reproducing kernel Hilbert  spaces with
reproducing kernel of the form $\dfrac{1-j(z)\overline{j(w)}}{2\pi(z+\overline{w})}$ for such $j$, and more generally Pontryagin spaces
when moreover $j$ is operator-valued.
\subsection{The case of Hardy spaces}
To put the study in perspective we review a few facts on Hardy spaces.
We begin by recalling the following result:
\begin{theorem}
The Hardy space $\mathbf H_2(\mathbb D)$ is the reproducing kernel Hilbert space with
reproducing kernel
\[
\frac{1}{1-z\overline{w}}.
\]
It is $R_\alpha$ invariant for $\alpha\in\mathbb D$, and the
following identity holds:
\begin{equation}
\label{equadb1}
\langle f,g\rangle+\alpha\langle R_\alpha f,g\rangle+\overline{\beta}\langle f,R_\beta g\rangle-(1-\alpha\overline{\beta})
\langle R_\alpha f,R_\beta g\rangle-\overline{g(\beta)}f(\alpha)=0,
\end{equation}
where $f,g\,\in\,\mathbf H_2(\mathbb D)$ and $\alpha,\beta\in\mathbb D$.
\end{theorem}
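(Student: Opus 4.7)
The plan is to verify the three assertions in turn, reducing the identity to an algebraic check on reproducing kernels.

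First, for the reproducing kernel formula, I would recall that $\mathbf H_2(\mathbb{D})$ consists of power series $f(z)=\sum_{n\geq 0}a_n z^n$ with $\|f\|^2=\sum|a_n|^2$; the geometric expansion $\frac{1}{1-z\overline{w}}=\sum_{n\geq 0}\overline{w}^{\,n}z^n$, valid on $\mathbb{D}$, then yields $\langle f,\frac{1}{1-z\overline{w}}\rangle=\sum_n a_n w^n=f(w)$ by orthonormality of the monomials $\{z^n\}$.

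Next, for the $R_\alpha$-invariance, I would exploit the resolvent identity \eqref{resoid} with $\beta=0$, which rearranges to $R_\alpha(I-\alpha R_0)=R_0$. Since $R_0=M_z^*$ is the backward shift on $\mathbf H_2(\mathbb{D})$ and hence a contraction, the operator $I-\alpha R_0$ is boundedly invertible for $|\alpha|<1$ by a Neumann series; it follows that
\[
R_\alpha=R_0(I-\alpha R_0)^{-1}
\]
is a bounded operator on $\mathbf H_2(\mathbb{D})$, which establishes invariance with an explicit bound.

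For the identity \eqref{equadb1}, my plan is to verify it on the reproducing kernels $k_a(z)=\frac{1}{1-z\overline{a}}$ and extend by sesquilinearity and density of $\mathrm{span}\{k_a:a\in\mathbb{D}\}$ in $\mathbf H_2(\mathbb{D})$. A direct telescoping shows
\[
R_\alpha k_a(z)=\frac{k_a(z)-k_a(\alpha)}{z-\alpha}=\frac{\overline{a}}{1-\alpha\overline{a}}\,k_a(z),
\]
so every reproducing kernel is an eigenvector of $R_\alpha$. Substituting $f=k_a$ and $g=k_b$ converts \eqref{equadb1} into a rational identity in the variables $\alpha,\overline{\beta},\overline{a},b$ which, after clearing the common denominator $(1-b\overline{a})(1-\alpha\overline{a})(1-\overline{\beta}b)$, reduces to a polynomial identity verified by collecting terms. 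The main obstacle is not analytic but one of bookkeeping: one must track the antilinearity in the second slot carefully, so that applying $R_\beta$ there contributes the eigenvalue $\frac{b}{1-\overline{\beta}b}$ rather than $\frac{\overline{b}}{1-\beta\overline{b}}$. Once that is handled, both the invariance and the identity fall out without further analytic input.
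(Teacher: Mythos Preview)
Your proposal is correct. All three steps are sound: the reproducing kernel computation is standard, the Neumann series argument for $R_\alpha=R_0(I-\alpha R_0)^{-1}$ is valid since $\|R_0\|\le 1$, and the eigenvector relation $R_\alpha k_a=\dfrac{\overline{a}}{1-\alpha\overline{a}}\,k_a$ reduces \eqref{equadb1} to a polynomial identity that indeed vanishes; continuity of $R_\alpha$, $R_\beta$ and of point evaluation then carries the identity from the dense span of kernels to all of $\mathbf H_2(\mathbb D)$.

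The paper argues differently for \eqref{equadb1}. Rather than testing on kernels, it uses the operator identity $R_0(I+\alpha R_\alpha)=R_\alpha$ to rewrite the left-hand side as
\[
\langle F,G\rangle-\langle R_0F,R_0G\rangle-\overline{G(0)}F(0),
\]
with $F=(I+\alpha R_\alpha)f$ and $G=(I+\beta R_\beta)g$, which is trivially zero in $\mathbf H_2(\mathbb D)$. This reduces the two-parameter identity to the single case $\alpha=\beta=0$ by a change of variable, and it works for arbitrary $f,g$ directly without a density step. Your kernel-based method, on the other hand, is exactly the strategy the paper adopts later (Lemma~\ref{Lemma27}) to prove the half-plane analog \eqref{equadb2}, where no natural basepoint like the origin is available. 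So the two approaches trade places: the paper's proof here exploits the special role of $0\in\mathbb D$, while yours is the more portable argument that generalizes immediately to other domains.
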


We will use \eqref{equadb1} in the proof of Proposition \ref{hannah} and for this reason we now give a quick proof of it. Note that in view of the resolvent
identity \eqref{resoid}, we have
\[
R_0(I+\alpha R_\alpha)=R_\alpha,\quad \alpha\in\mathbb D.
\]
The left hand-side of
\eqref{equadb1} may be rewritten as
\[
\langle (I+\alpha R_\alpha)f,(I+\beta R_\beta)g\rangle-\langle R_0(I+\alpha R_\alpha)f,R_0(I+\beta R_\beta)g\rangle=\overline{g(\beta)}
f(\alpha),
\]
or, setting $F=(I+\alpha R_\alpha)f$ and $G=(I+\beta R_\beta)g$,
\[
\langle F,G\rangle-\langle R_0 F,R_0G\rangle=\overline{G(0)}F(0),
\]
which is trivial in $\mathbf H_2(\mathbb D)$.
\\
In the case of the right half-plane $\mathbb C_r$ we have:
\begin{theorem}
The Hardy space $\mathbf H_2(\mathbb C_r)$ is the reproducing kernel Hilbert space with
reproducing kernel
\[
\frac{1}{2\pi(z+\overline{w})}.
\]
It is $R_\alpha$ invariant for $\alpha\in\mathbb C_r$, and the
following identity holds:
\begin{equation}
\label{equadb2}
\langle R_\alpha f,g\rangle+\langle f,R_\beta g\rangle+(\alpha+\overline{\beta})
\langle R_\alpha f,R_\beta g\rangle+ 2\pi \overline{g(\beta)}f(\alpha)=0,
\end{equation}
where $f,g\,\in\,\mathbf H_2(\mathbb C_r)$ and $\alpha,\beta\in\mathbb C_r$.
\end{theorem}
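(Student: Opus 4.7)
The plan is to mirror the strategy used for the disk identity \eqref{equadb1}: the reproducing kernel property is classical, $R_\alpha$-invariance is a routine boundedness argument, and the identity itself will be reduced, by sesquilinearity and continuity, to a one-line verification on reproducing kernels.

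That $\mathbf H_2(\mathbb C_r)$ is the reproducing kernel Hilbert space with kernel $k_w(z)=\frac{1}{2\pi(z+\overline w)}$ is classical, following from the Paley--Wiener representation together with the Cauchy integral formula for the right half-plane; I assume it. For $R_\alpha$-invariance, note that $R_\alpha f(z)=(f(z)-f(\alpha))/(z-\alpha)$ is holomorphic in $\mathbb C_r$ (the apparent singularity at $\alpha$ is removable), and on the imaginary axis $|iy-\alpha|\geq \Re\alpha>0$, so $R_\alpha f$ has an $L^2$ boundary value and lies in $\mathbf H_2(\mathbb C_r)$. Equivalently, one can invoke $R_\alpha^{*}=(M_z+\overline\alpha I)^{-1}$ from the excerpt, which is manifestly bounded and whose adjoint is $R_\alpha$; this also delivers the operator bound we need below.

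For \eqref{equadb2}, both sides are linear in $f$, conjugate-linear in $g$, and jointly continuous on $\mathbf H_2(\mathbb C_r)\times\mathbf H_2(\mathbb C_r)$ (using boundedness of $R_\alpha$ and $R_\beta$ together with continuity of the evaluation functionals at $\alpha$ and $\beta$). Since the linear span of $\{k_\gamma:\gamma\in\mathbb C_r\}$ is dense in $\mathbf H_2(\mathbb C_r)$, it suffices to check the identity when $f=k_\gamma$ and $g=k_\delta$. A one-line partial fractions calculation yields
\[
R_\alpha k_\gamma=-\frac{1}{\alpha+\overline\gamma}\,k_\gamma,\qquad R_\beta k_\delta=-\frac{1}{\beta+\overline\delta}\,k_\delta,
\]
so every inner product in \eqref{equadb2} becomes an explicit scalar multiple of $k_\gamma(\delta)=1/(2\pi(\delta+\overline\gamma))$. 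Multiplying through by the common denominator $2\pi(\alpha+\overline\gamma)(\overline\beta+\delta)(\delta+\overline\gamma)$, the identity collapses to
\[
-(\overline\beta+\delta)-(\alpha+\overline\gamma)+(\alpha+\overline\beta)+(\delta+\overline\gamma)=0,
\]
which is trivially true.

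The only genuine obstacle is that, unlike the disk, the boundary point $0\in i\mathbb R$ is not in the domain, so the slick substitution $F=(I+\alpha R_\alpha)f$, $G=(I+\beta R_\beta)g$ used to reduce \eqref{equadb1} to the triviality $\langle F,G\rangle-\langle R_0 F,R_0 G\rangle=\overline{G(0)}F(0)$ has no immediate analog here; working on the kernels sidesteps this and turns the whole identity into elementary algebra. A mild care point is keeping track of conjugations, since the half-plane kernel is no longer symmetric in $z$ and $\overline w$ in the way the Szeg\H{o} kernel is.
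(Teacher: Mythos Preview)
Your proposal is correct and follows essentially the same route as the paper's own proof (given there as Lemma~\ref{Lemma27}): compute $R_\alpha k(\cdot,\mu)=-\frac{1}{\alpha+\overline\mu}k(\cdot,\mu)$, verify the identity on pairs of reproducing kernels by the same algebraic cancellation you wrote, and then extend by density and boundedness of $R_\alpha$, $R_\beta$. The only cosmetic difference is that the paper records two alternative arguments for the boundedness of $R_\alpha$ (one via the $\mathbf L_2(\mathbb R)$ embedding and Cauchy--Schwarz, one bootstrapped from the identity on the kernel span), whereas you invoke $R_\alpha^*=(M_z+\overline\alpha I)^{-1}$ directly.
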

The proof that \eqref{equadb2} holds in $\mathbf H_2(\mathbb C_r)$ will be used in the sequel, and thus it will be recalled in Section \ref{subsec2.3}.\smallskip

It is worthwhile to mention that an approach to generalized Beurling-Lax theorems was developed by de Branges and Rovnyak, see \cite{dbbook,dbr1,dbr2}, and consists in leaving
the realm of the Hardy space, but keeping equalities \eqref{equadb1} or \eqref{equadb2} (or, some variations of these), and then work in the setting of reproducing kernel spaces; see \cite{ball-contrac,dbhsaf1,rov-66}. A related paper, which makes use of de Branges spaces and \eqref{equadb2} and uses Riccati equations to consider the case of singular Gram matrices, is \cite{MR2278223}.\\

In another approach, see \cite{dbr2}, one considers inequality in \eqref{equadb1},  setting $f=g$ and $\alpha=\beta=0$.
More generally, setting $f=g$ and $\alpha=\beta$ in equalities \eqref{equadb1} and \eqref{equadb2}, one can weaken the equalities to the
requirements
\begin{equation}
\label{inequadb1}
\langle f,f\rangle+\alpha\langle R_\alpha f,f\rangle+\overline{\alpha}\langle f,R_\alpha f\rangle-(1-|\alpha|^2)
\langle R_\alpha f,R_\alpha f\rangle-|f(\alpha)|^2\le 0,
\end{equation}
or
\begin{equation}
\label{inequadb2}
\langle R_\alpha f,f\rangle+\langle f,R_\alpha f\rangle+(2{\rm Re}\,\alpha)
\langle R_\alpha f,R_\alpha f\rangle+|f(\alpha)|^2\le 0.
\end{equation}
depending on the setting (disk or right half-plane). The corresponding Hilbert spaces are then contractively included inside the corresponding
Hardy spaces. This will not be true anymore when one introduces indefinite metrics.
\subsection{The case of Pontryagin spaces}
To motivate our results and provide the setting to the paper we recall some results from \cite{abds2-jfa,adrs}. The various notions related to
Pontryagin spaces are reviewed in Section \ref{sec21}, and a reader not familiar with the theory of indefinite inner product spaces can specialize the forthcoming discussion to the case of Hilbert spaces and positive definite functions.\smallskip

Let $\mathcal C$ and $\mathcal D$ be two
Pontryagin spaces with the same index of negativity. By $\mathbf L(\mathcal D,\mathcal C)$ we will denote the set of continuous linear operators from $\mathcal D$ to $\mathcal C$. A $\mathbf L(\mathcal D,\mathcal C)$-valued function $S$ analytic in some open subset $\Omega$ of the open unit disk is called a generalized Schur function if the $\mathbf L(\mathcal  C,\mathcal C)$-valued kernel
\begin{equation}
\label{michelle1234}
K_S(z,w)=\frac{I_{\mathcal C}-S(z)S(w)^*}{1-z\overline{w}},\quad z,w\in\Omega.
\end{equation}
has a finite number of negative squares, say $\kappa$, in $\Omega$.
Assuming $0\in\Omega$, it is proved in \cite{adrs} that $S$ is a generalized Schur function if and only if it can be written in the form
\begin{equation}
S(z)=D+zC(I_{\mathcal P}-zA)^{-1}B
\end{equation}
where $\mathcal P$ is a Pontryagin space with index of negativity $\kappa$ and where the operator-matrix
\begin{equation}
\label{s-adrs}
\begin{pmatrix}A&B\\ C&D\end{pmatrix}\,\,:\,\, \mathcal P\oplus \mathcal D\,\,\longrightarrow\,\,\mathcal P\oplus \mathcal C
\end{equation}
is coisometric. It follows that $S$ has a unique meromorphic extension to $\mathbb D$, and for $S$ so extended the kernel $K_S$ has still
$\kappa$ negative squares for $z,w$ in the domain of analyticity of $S$. We here recall that generalized Schur functions and related classes of vector-valued functions have been extensively studied by Krein and Langer; see for instance \cite{kl1,kl2,kl3}.\smallskip

The reproducing kernel Pontryagin space  $\mathcal P(S)$,
with reproducing kernel $K_S$ is $R_0$-invariant and the coisometry property in \eqref{s-adrs} implies that
\begin{equation}
[R_0 f,R_0f]_{\mathcal P}\le[f,f]_{\mathcal P}-[f(0),f(0)]_{\mathcal C},\quad \forall f\in\mathcal P.
\label{malpensa190715!!!!}
\end{equation}
Conversely,  the following characterization of $\mathcal P(S)$ spaces was given in \cite[Theorem 3.1.2, p. 85]{adrs}:
\begin{theorem}
Let $\mathcal C$ be a Pontryagin space, and let $\Omega$ be an open subset of the open unit disk $\mathbb D$ containing the origin.
Let $\mathcal P$ be a reproducing kernel Pontryagin space of $\mathcal C$-valued functions analytic in $\Omega$, which is $R_0$-invariant and such that \eqref{malpensa190715!!!!} holds in $\mathcal P$. Any element of $\mathcal P$ has a meromorphic extension to $\mathbb D$ and there exists a Pontryagin space $\mathcal C_1$ with ${\rm ind}_-(\mathcal C_1)={\rm ind}_-(\mathcal C)$ and a function $S\in\mathcal S_\kappa(\mathcal C_1,\mathcal C)$, with $\kappa={\rm ind}_-(\mathcal P)$, such that the reproducing kernel of the space $\mathcal P$ is of the form \eqref{michelle1234}.
\label{michelle0}
\end{theorem}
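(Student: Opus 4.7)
The plan is to follow the backward-shift realization strategy that underlies the Krein--Langer theory as developed in \cite{adrs}. First, I would rewrite \eqref{malpensa190715!!!!} as the statement that the column operator
\[
V : \mathcal{P} \longrightarrow \mathcal{P} \oplus \mathcal{C}, \qquad Vf = \begin{pmatrix} R_0 f \\ f(0) \end{pmatrix},
\]
is a contraction between Pontryagin spaces, noting that $R_0$ is automatically bounded on $\mathcal{P}$ by the closed graph theorem together with the continuity of point evaluation. Second, I would invoke the Krein-type coisometric extension theorem for Pontryagin-space contractions to produce a Pontryagin space $\mathcal{C}_1$ with $\mathrm{ind}_-(\mathcal{C}_1) = \mathrm{ind}_-(\mathcal{C})$ and a coisometric extension
\[
U = \begin{pmatrix} A & B \\ C & D \end{pmatrix} : \mathcal{P} \oplus \mathcal{C}_1 \longrightarrow \mathcal{P} \oplus \mathcal{C}
\]
whose restriction to $\mathcal{P} \oplus \{0\}$ is $V$, so that $A = R_0$ and $C = E_0$, the evaluation at the origin.

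Third, I would set $S(z) = D + zC(I_{\mathcal{P}} - zA)^{-1} B$. By the coisometric realization direction of the Krein--Langer characterization recalled before the theorem, $S$ belongs to $\mathcal{S}_\kappa(\mathcal{C}_1, \mathcal{C})$ with $\kappa = \mathrm{ind}_-(\mathcal{P})$, and $S$ admits a meromorphic extension to $\mathbb{D}$. Fourth, I would identify the reproducing kernel of $\mathcal{P}$ with $K_S$. Iterating the identity $f(z) = f(0) + z(R_0 f)(z)$ yields the representation of point evaluation
\[
E_w = C(I_{\mathcal{P}} - wA)^{-1}
\]
for $w$ in a neighborhood of the origin, whence
\[
K_{\mathcal{P}}(z,w) = E_z E_w^* = C(I_{\mathcal{P}} - zA)^{-1} (I_{\mathcal{P}} - \overline{w} A^*)^{-1} C^*.
\]
A direct algebraic manipulation using the four coisometry identities $AA^* + BB^* = I_{\mathcal{P}}$, $AC^* + BD^* = 0$, $CA^* + DB^* = 0$ and $CC^* + DD^* = I_{\mathcal{C}}$ then shows that the right-hand side equals $(I_{\mathcal{C}} - S(z)S(w)^*)/(1 - z\overline{w}) = K_S(z,w)$. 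The meromorphic continuation of elements of $\mathcal{P}$ to $\mathbb{D}$ follows from that of $S$, since finite sums $\sum_j K_{\mathcal{P}}(\cdot, w_j) c_j = \sum_j K_S(\cdot, w_j) c_j$ are dense in $\mathcal{P}$.

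The main obstacle is the second step, the coisometric extension. In the Hilbert-space case a defect space is built straightforwardly from $(I - VV^*)^{1/2}$, but in the Pontryagin setting one must enlarge the input space by a Pontryagin space $\mathcal{C}_1$ of exactly the correct negative signature so that the extended operator is coisometric between two Pontryagin spaces whose negative indices agree. This is precisely what forces $\mathrm{ind}_-(\mathcal{C}_1) = \mathrm{ind}_-(\mathcal{C})$ and requires the full Krein theory of extensions of isometric operators in spaces with an indefinite inner product. The remaining fourth step is routine but bookkeeping-intensive realization-theoretic algebra.
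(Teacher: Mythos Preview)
Your proposal is correct and follows essentially the same route as the paper. The paper does not reprove Theorem~\ref{michelle0} (it is quoted from \cite[Theorem~3.1.2]{adrs}), but its proof of the half-plane analogue, Theorem~\ref{michelle1}, mirrors the \cite{adrs} argument and matches your outline step for step: the column $V=\begin{pmatrix}R_0\\ C_0\end{pmatrix}$ is a contraction by \eqref{malpensa190715!!!!}; the coisometric completion you call a ``Krein-type extension'' is carried out concretely via the index identity ${\rm ind}_-(I-VV^*)={\rm ind}_-(\mathcal C)$ together with the Bogn\'ar--Kr\'amli factorization of $I-VV^*$ (this is exactly your ``main obstacle'', and the paper names the tool explicitly); then $S$ is defined by the realization formula and the kernel identity is verified by computing $C_w=C(I-wA)^{-1}$, as you do.
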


 Thus the function $S$ is meromorphic in $\mathbb D$, with domain of analyticity $\Omega(S)$. Formula
\eqref{michelle1234} means that elements of $\mathcal P$ are restrictions to $\Omega$ of the elements of the reproducing kernel
Hilbert space $\mathcal P(S)$ with reproducing kernel $K_S(z,w)$.\smallskip

A version of Theorem \ref{michelle0} in the quaternionic setting, and for slice hyperholomorphic functions, was proved in \cite[Theorem 7.1, p. 862]{MR3192300}.
The purpose of this note is to give a version of this result in the case of the right half-plane in the complex setting
(see Theorem \ref{michelle1}), and for the right half-space in the quaternionic setting (see Theorem \ref{michelle112345}).

Finally we remark that inequality \eqref{malpensa190715!!!!} can be set at an arbitrary point of the open unit disk $\mathbb D$ as
\begin{equation}
\label{radisk}
[R_\alpha f,R_\alpha f]_{\mathcal P}\le[(I_{\mathcal P}+\alpha R_\alpha)f,(I_{\mathcal P}+\alpha R_\alpha)f]_{\mathcal P}-
[f(\alpha),f(\alpha)]_{\mathcal C}.
\end{equation}
See \cite[(3.6) in Theorem 3.4]{ad3}. Furthermore, we will show that
it is possible to use the setting developed in \cite{MR1197502,ad-jfa,ad9} to write both Theorems \ref{michelle0} and \ref{michelle1} under a common setting.\\

The outline of the paper is as follows. It consists of three sections besides the introduction. The second section is devoted to the case of the right
half-plane case $\mathbb C_r$, and is divided into four subsections. In the first two subsections we review briefly some notions on Pontryagin spaces and on operator-valued generalized Schur functions associated to the right half-plane. We then prove the counterpart of Theorem \ref{michelle0}, and consider the particular case of spaces isometrically included in the Hardy space of the right half-plane. In the third section, divided into four subsections, we discuss the unified setting, to which we already alluded. We present the main aspects of this setting, discuss the Hardy space and the generalized Schur functions in this framework, and consider a general theorem, namely Theorem \ref{michelle00000}, which includes as particular cases Theorems \ref{michelle0} and \ref{michelle1}. In the fourth and last section we study the counterpart of Theorem \ref{michelle1} in the setting of slice hyperholomorphic functions in the half-space case and we discuss the unified setting. Since the composition of two slice hyperholomorphic functions is not, in general, slice hyperholomorphic, our results in the unified setting are proved using the subclass of quaternionic intrinsic functions.

\section{The complex-valued case}
\setcounter{equation}{0}
\subsection{Pontryagin spaces and their operators}
\label{sec21}
We begin this section by reviewing some basic facts on Pontryagin spaces. For more information we refer the reader to \cite{bognar,MR92m:47068,ikl}.
A vector space $\mathcal V$ endowed with an Hermitian form $[\cdot,\cdot]$ is called a Pontryagin space if it can be written as
\begin{equation}
\mathcal V=\mathcal V_+[+]\mathcal V_-,
\label{decomp1}
\end{equation}
where:\\
$(a)$ The spaces $(\mathcal V,[\cdot,\cdot])$ and $(\mathcal V_-,-[\cdot,\cdot])$ are Hilbert spaces, and $\mathcal V_-$ is finite dimensional.\\
$(b)$ $\mathcal V_+\cap\mathcal V_-=\left\{0\right\}$ and
\begin{equation}
[v_+,v_-]=0,\quad \forall v_+\in\mathcal V_+\quad\text{and}\quad v_-\in\mathcal V_-.
\label{orthogo}
\end{equation}
The decomposition \eqref{decomp1} is called a fundamental decomposition; it is not unique (unless $\mathcal V$ is a Hilbert space, or an anti Hilbert space), but all
spaces $\mathcal V_-$ appearing in a fundamental decompositions have the same dimension, called the index of negativity (or simply, the index) of the Pontryagin space. For a given fundamental decomposition, the map
\begin{equation}
\label{normpk}
\|v\|=[v_+,v_+]-[v_-,v_-]
\end{equation}
is a norm and $(\mathcal V,\|\cdot\|)$ is a Hilbert space. All norms \eqref{normpk} are equivalent, and define the topology of the
Pontryagin space. Continuity of linear operators is defined with respect to this topology.\smallskip

Given two Pontryagin spaces $(\mathcal V_1,[\cdot,\cdot]_1)$ and $(\mathcal V_2,[\cdot,\cdot]_2)$, the adjoint of a continuous linear operator from $\mathcal V_1$ into $\mathcal V_2$ is defined by
\begin{equation}
[Av_1,v_2]_2=[v_1,A^*v_2]_1,\quad \forall v_1\in\mathcal V_1\quad \text{and}\quad v_2\in\mathcal V_2.
\label{adjoint}
\end{equation}
The following important results in the theory of linear operators in Pontryagin spaces are used in the sequel,
see \cite[Theorem 1.4.2 (1), p. 29 and Theorem 1.3.5, p 26]{adrs} and the references therein:
\begin{theorem}
Let $(\mathcal V_1,[\cdot,\cdot]_1)$ and $(\mathcal V_2,[\cdot,\cdot]_2)$ be two Pontryagin spaces with same index of negativity, and let
$T$ be a densely defined contraction from $\mathcal D(T)\subset\mathcal V_1$ into $\mathcal V_2$, meaning that
\[
[Tv_1,Tv_1]_2\le[v_1,v_1]_1,\quad \forall v_1\in\mathcal D(T).
\]
Then, $T$ extends to an everywhere defined contraction, whose adjoint is also a contraction.
\end{theorem}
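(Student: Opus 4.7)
The plan is to reduce the statement to the classical Hilbert-space fact that a densely defined contraction extends uniquely by continuity to an everywhere defined contraction whose adjoint is again a contraction. First I would choose fundamental decompositions $\mathcal V_j=\mathcal V_j^+[+]\mathcal V_j^-$ for $j=1,2$; the equality of negative indices gives $\dim \mathcal V_1^-=\dim\mathcal V_2^-=\kappa<\infty$. Relative to these decompositions, write $T$ as a $2\times 2$ block operator
\[
T=\begin{pmatrix}T_{++}&T_{+-}\\T_{-+}&T_{--}\end{pmatrix}:\mathcal V_1^+\oplus\mathcal V_1^-\longrightarrow\mathcal V_2^+\oplus\mathcal V_2^-,
\]
so that the contraction condition $[Tv,Tv]_2\le[v,v]_1$ becomes the explicit inequality
\[
\|T_{++}v_++T_{+-}v_-\|^2+\|v_-\|^2\le\|v_+\|^2+\|T_{-+}v_++T_{--}v_-\|^2
\]
between the Hilbert norms induced by \eqref{normpk}.

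The next step is the Potapov--Ginzburg transform: formally solve the block system above for the pair $(w_+,v_-)$, where $w=Tv=w_++w_-$, as a function of $(v_+,w_-)$. This defines, on a dense domain, an auxiliary operator $S:\mathcal V_1^+\oplus\mathcal V_2^-\longrightarrow\mathcal V_2^+\oplus\mathcal V_1^-$, and a direct algebraic computation shows that the Pontryagin-contractivity of $T$ is equivalent to ordinary Hilbert-space contractivity of $S$. Being a densely defined contraction between Hilbert spaces, $S$ extends uniquely by continuity to an everywhere defined contraction whose Hilbert adjoint is again a contraction. Reversing the correspondence produces the desired everywhere defined Pontryagin-contractive extension $\widetilde T$ of $T$, and the symmetric form of the Potapov--Ginzburg transform translates contractivity of the adjoint of the extension of $S$ into contractivity of $\widetilde T^{\,*}$ with respect to the indefinite inner products.

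The principal obstacle is the well-definedness of the Potapov--Ginzburg transform, which requires the finite-dimensional block $T_{--}:\mathcal V_1^-\to\mathcal V_2^-$ to be invertible. Since source and target both have dimension $\kappa$, it suffices to establish injectivity, and this is exactly where the equality of negative indices enters: if $v_-\in\mathcal V_1^-\cap\mathcal D(T)$ satisfies $T_{--}v_-=0$, then $Tv_-=T_{+-}v_-\in\mathcal V_2^+$ and the contraction inequality yields $\|T_{+-}v_-\|^2\le-\|v_-\|^2$, whence $v_-=0$. A short perturbation argument, exploiting density of $\mathcal D(T)$ in $\mathcal V_1$ together with the finite dimension of $\mathcal V_1^-$, allows one to arrange $\mathcal V_1^-\subset\mathcal D(T)$ from the start so that this calculation applies; the rest of the argument is routine continuity and adjoint bookkeeping, along the lines of \cite[Chapter 1]{adrs}.
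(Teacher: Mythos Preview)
The paper does not prove this theorem; it merely records it as a background fact and cites \cite[Theorem 1.4.2 (1), p.~29]{adrs} for the proof. There is therefore no ``paper's own proof'' to compare against.

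Your outline via the Potapov--Ginzburg transform is the standard route and is essentially what the cited reference does. Two points deserve sharper treatment. First, the sentence ``a short perturbation argument\dots allows one to arrange $\mathcal V_1^-\subset\mathcal D(T)$'' is doing real work: what you need is the (nontrivial but known) fact that every dense linear subspace of a Pontryagin space of index $\kappa$ contains a maximal uniformly negative subspace, which one then takes as $\mathcal V_1^-$ in a fundamental decomposition. This is not a perturbation argument and is exactly where the Pontryagin hypothesis (finite $\kappa$) is used; without it the whole scheme collapses. Second, when you invert the transform after extending $S$ by continuity, you must verify that the relevant finite-dimensional block of the extension---namely $\widetilde S_{22}:\mathcal V_2^-\to\mathcal V_1^-$---remains invertible. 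This is immediate here because $\{0\}\oplus\mathcal V_2^-$ already lies in the domain of $S$ (once $\mathcal V_1^-\subset\mathcal D(T)$) and $S_{22}=T_{--}^{-1}$ there, so the block is unchanged by the extension; but it should be said. With these two points made explicit, your argument is complete.
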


\begin{theorem} (see \cite{ikl}, \cite[Theorem 1.3.6 p. 26]{adrs}).
A contraction between two Pontryagin spaces with same index of negativity has a maximal strictly negative invariant subspace.
\end{theorem}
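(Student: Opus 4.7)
The plan is to parametrize maximal strictly negative subspaces by angular operators and then solve a matricial Riccati-type equation via a fixed-point argument on a finite-dimensional compact convex set. I read the statement as follows: for a contraction $T\colon \mathcal V_1\to \mathcal V_2$ between two Pontryagin spaces with common negative index $\kappa$, there exists a maximal strictly negative subspace $\mathcal L\subset \mathcal V_1$ whose image $T\mathcal L$ is contained in a maximal strictly negative subspace of $\mathcal V_2$.

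First, I would fix fundamental decompositions
\[
\mathcal V_i \;=\; \mathcal V_i^+\,[+]\,\mathcal V_i^-, \qquad \dim \mathcal V_1^- \;=\; \dim \mathcal V_2^- \;=\; \kappa, \qquad i=1,2,
\]
and write $T$ in block form
\[
T \;=\; \begin{pmatrix} A & B \\ C & D \end{pmatrix}\;:\;\mathcal V_1^+\oplus \mathcal V_1^-\;\longrightarrow\;\mathcal V_2^+\oplus \mathcal V_2^-.
\]
The contraction inequality $[Tv,Tv]_2\le [v,v]_1$ will translate into block inequalities that control the blocks $A,B,C,D$ in the Hilbert norms attached to the fundamental decompositions.

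Next, I would recall the angular-operator parametrization: every maximal strictly negative subspace of $\mathcal V_i$ is the graph $\mathcal L(K)=\{\,Ku+u\,:\,u\in\mathcal V_i^-\,\}$ of a strict contraction $K\colon \mathcal V_i^-\to \mathcal V_i^+$. The inclusion $T\mathcal L(K_1)\subset \mathcal L(K_2)$ is then equivalent, as soon as $CK_1+D$ is invertible, to the Riccati-type relation
\[
K_2 \;=\; \Phi(K_1) \;:=\; (AK_1+B)(CK_1+D)^{-1}.
\]
Thus the problem reduces to exhibiting a strict contraction $K_1$ for which $CK_1+D$ is invertible and $\Phi(K_1)$ is again a strict contraction $\mathcal V_2^-\to \mathcal V_2^+$.

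Finally, I would set this up on the closed unit ball $\mathcal B$ in $\mathbf L(\mathcal V_1^-,\mathcal V_1^+)$, which, by the equal-index hypothesis $\dim \mathcal V_1^- = \dim \mathcal V_2^- = \kappa$, is a compact convex subset of a finite-dimensional vector space. The main obstacle is twofold: using the block contraction inequality to show (i) that $CK+D$ is invertible for every $K\in\mathcal B$, and (ii) that $\Phi(\mathcal B)$ is contained in the analogous unit ball of $\mathbf L(\mathcal V_2^-,\mathcal V_2^+)$. Granting (i) and (ii), continuity of $\Phi$ is automatic, and Brouwer's fixed-point theorem (after identifying source and target through a linear isomorphism made available by the equal-dimension condition) yields a $K_\star\in\mathcal B$ with $\Phi(K_\star)$ a valid angular operator; the subspace $\mathcal L(K_\star)$ is then the required maximal strictly negative invariant subspace. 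The equal-index hypothesis is indispensable precisely at this last step, since it is what allows the rational map $\Phi$ to be recast as a self-map of a compact convex finite-dimensional set and the fixed-point machinery to be applied.
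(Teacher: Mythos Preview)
The paper does not supply a proof of this theorem; it is quoted from \cite{ikl} and \cite[Theorem~1.3.6]{adrs} and used as a black box (notably in the proof of Theorem~\ref{michelle1}, where the contraction $T$ acts on a single Pontryagin space $\mathcal P$). So there is no internal proof to compare against, but let me comment on the proposal itself, since it contains a genuine gap.

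Your key compactness claim is incorrect. You assert that the closed unit ball $\mathcal B\subset\mathbf L(\mathcal V_1^-,\mathcal V_1^+)$ is a compact convex subset of a \emph{finite-dimensional} vector space, appealing to the equal-index hypothesis. But the equal-index hypothesis only gives $\dim\mathcal V_1^-=\dim\mathcal V_2^-=\kappa<\infty$; it says nothing about $\mathcal V_1^+$, which is typically infinite dimensional. Hence $\mathbf L(\mathcal V_1^-,\mathcal V_1^+)\cong(\mathcal V_1^+)^\kappa$ is infinite dimensional and $\mathcal B$ is not norm-compact, so Brouwer's theorem is unavailable. The classical argument (Krein, Pontryagin; see \cite{ikl}) follows exactly the angular-operator route you outline, but replaces norm compactness by compactness of $\mathcal B$ in the weak operator topology and invokes a Schauder--Tychonoff (or Glicksberg) fixed-point theorem; one must then verify that $\Phi$ is weakly continuous, which is where the finiteness of $\kappa$ genuinely enters.

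A second issue is your reading of the statement. An ``invariant subspace'' only makes sense for a self-map, and indeed the application in this paper is to an operator $T\colon\mathcal P\to\mathcal P$. Your device of ``identifying source and target through a linear isomorphism'' when $\mathcal V_1\neq\mathcal V_2$ does not produce a subspace $\mathcal L$ with $T\mathcal L\subset\mathcal L$; it only yields $T\mathcal L\subset\mathcal L'$ for some other maximal negative subspace $\mathcal L'$, which is a weaker (and essentially trivial) conclusion. You should take $\mathcal V_1=\mathcal V_2$ from the outset; then $\Phi$ is already a self-map of the ball of angular operators, and a fixed point $K_\star=\Phi(K_\star)$ gives $T\mathcal L(K_\star)\subset\mathcal L(K_\star)$ directly.
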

We conclude this section with the notion of negative squares.
\begin{definition}
Let $(\mathcal C,\,[\cdot,\cdot])$ be a Pontryagin space (the coefficient space). A $\mathbf L(
\mathcal C)$-valued function $K(z,w)$ defined for $z,w$ in a set $\Omega$ has a finite number, say $\kappa$, of negative squares in
$\Omega$ if it is Hermitian,
\[
K(z,w)=K(w,z)^*,\quad\forall z,w\in\Omega,
\]
and if the following condition holds: For every integer $N$ and every choice of $w_1,\ldots, w_N\in\Omega$ and $c_1,\ldots, c_N\in\mathcal C$, the $N\times N$ Hermitian matrix with $\ell k$ entry $[K(w_\ell,w_k)c_k,c_\ell]$ has at most $\kappa$ strictly negative eigenvalues and exactly $\kappa$ such eigenvalues for some choice of $N,w_1,\ldots, w_N,c_1,\ldots, c_N$.
\end{definition}

For the next result, which originates with the work of L. Schwartz \cite{schwartz}, see \cite[Theorems 1.1.2 and 1.1.3,p. 7]{adrs} and the references therein.
\begin{theorem}
There is a one-to-one correspondence between $\mathbf L(\mathcal C, \mathcal C)$-valued functions defined on $\Omega$ and having $\kappa$ negative squares there and reproducing kernel Pontryagin spaces  of functions defined on $\Omega$
with index of negativity $\kappa$.
\end{theorem}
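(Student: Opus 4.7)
The plan is to prove the correspondence in both directions, the harder one being the construction of the space from the kernel; I would treat it as a Pontryagin analogue of the Moore-Aronszajn construction.

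For the space-to-kernel direction, let $\mathcal{P}$ be a reproducing kernel Pontryagin space of $\mathcal{C}$-valued functions on $\Omega$ with index $\kappa$ and kernel $K$. For any $w_1,\ldots,w_N\in\Omega$ and $c_1,\ldots,c_N\in\mathcal{C}$, the Hermitian matrix with $\ell k$ entry $[K(w_\ell,w_k)c_k,c_\ell]_{\mathcal C}$ coincides, by the reproducing identity, with the Gram matrix of $\{K(\cdot,w_k)c_k\}$ inside $\mathcal{P}$, and therefore has at most $\kappa$ strictly negative eigenvalues. Equality is attained by choosing the kernel vectors to span a maximal strictly negative subspace of $\mathcal{P}$. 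Uniqueness of the space given the kernel follows in the same vein: any two such spaces agree on the dense subspace spanned by kernel functions, with identical inner products, hence coincide.

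For the kernel-to-space direction, I would let $\mathcal{V}_0$ denote the linear span of $\{K(\cdot,w)c : w\in\Omega,\ c\in\mathcal{C}\}$ equipped with the Hermitian form
$$\left[\sum_i K(\cdot,w_i)c_i,\sum_j K(\cdot,u_j)d_j\right]_0 = \sum_{i,j}[K(u_j,w_i)c_i,d_j]_{\mathcal C}.$$
One first verifies that this form is well defined (independent of the particular representations of the sums) and that the reproducing identity $[f,K(\cdot,w)c]_0=[f(w),c]_{\mathcal C}$ holds for every $f\in\mathcal{V}_0$; this identity forces the form to be non-degenerate on $\mathcal{V}_0$. The hypothesis that $K$ has $\kappa$ negative squares then allows the selection of a $\kappa$-dimensional strictly negative subspace $\mathcal{V}_-\subset\mathcal{V}_0$, and a Cauchy-Schwarz argument combined with non-degeneracy shows that its form-orthogonal complement $\mathcal{V}_+$ is positive definite. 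The candidate Pontryagin space is $\mathcal{P}=\mathcal{V}_-[+]\widetilde{\mathcal{V}_+}$, where $\widetilde{\mathcal{V}_+}$ is the Hilbert completion of $\mathcal{V}_+$.

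The main obstacle is the completion step: one must guarantee that Cauchy sequences in $\mathcal{V}_+$ converge to genuine $\mathcal{C}$-valued functions on $\Omega$, not merely to abstract equivalence classes, and that each $K(\cdot,w)c$ still belongs to $\mathcal{P}$ with the reproducing identity preserved on the whole space. This is handled by the reproducing identity itself, which together with Cauchy-Schwarz on $\mathcal{V}_+$ makes point evaluation a continuous operator, so the completion naturally embeds into the space of $\mathcal{C}$-valued functions on $\Omega$; the reproducing property then extends by continuity from $\mathcal{V}_0$ to all of $\mathcal{P}$, and the index of negativity of $\mathcal{P}$ equals $\dim\mathcal{V}_-=\kappa$ by construction.
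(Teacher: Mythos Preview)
The paper does not actually prove this theorem: it is quoted as background, with the proof deferred to Schwartz and to \cite[Theorems 1.1.2 and 1.1.3, p.~7]{adrs}. Your sketch is precisely the standard argument carried out in those references, so in that sense you are aligned with what the paper relies on.

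Your outline is correct in its architecture. Two places deserve a bit more care if you write it out in full. First, in the space-to-kernel direction, the claim that exactly $\kappa$ negative eigenvalues are attained needs the observation that the linear span of the $K(\cdot,w)c$ is dense in $\mathcal P$, together with the fact that a dense subspace of a Pontryagin space of index $\kappa$ must contain a $\kappa$-dimensional uniformly negative subspace; this is standard but not automatic. Second, in the completion step you correctly flag the real difficulty, but ``the completion naturally embeds'' hides the point that one must check injectivity: if a Cauchy sequence in $\mathcal V_+$ converges pointwise to the zero function, it must converge to zero in norm. This is where the argument in \cite{adrs} does some work (decomposing $K(\cdot,w)c$ along $\mathcal V_-[+]\mathcal V_+$ and using that $\mathcal V_-$ is finite-dimensional), and your sketch would benefit from making that step explicit. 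With those two points filled in, your proof is complete and matches the literature.
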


\subsection{Generalized Schur functions}
\label{secqaz}
Let $\mathcal C$ and $\mathcal D$ be two
Pontryagin spaces with the same index of negativity.
A $\mathbf L(\mathcal D,\mathcal C)$-valued function $S$ analytic in some open subset $\Omega$ of the open right half-plane
$\mathbb C_r$ is called a generalized Schur function if the $\mathbf L(\mathcal  C)$-valued kernel
\begin{equation}
\label{michelle12345678}
K_S(z,w)=\frac{I_{\mathcal C}-S(z)S(w)^*}{2\pi(z+\overline{w})},\quad z,w\in\Omega.
\end{equation}
has a finite number of negative squares, say $\kappa$, in $\Omega$. Let $\alpha\in\Omega$. It follows from the analysis in \cite{abds2-jfa}
that a function $S$ is a generalized Schur function of the right half-plane if and only if it can be written in the form
\begin{equation}
\label{250715toParis}
S(z)=H+\frac{z-\alpha}{z+\overline{\alpha}}G\left(I_{\mathcal P}-\frac{z-\alpha}{z+\overline{\alpha}}T\right)^{-1}F,
\end{equation}
where $\mathcal P$ is a Pontryagin space with index of negativity $\kappa$ and where the operator-matrix
\begin{equation}
\label{s-adrs111}
\begin{pmatrix}T&F\\ G&H\end{pmatrix}\,\,:\,\, \mathcal P\oplus \mathcal D\,\,\longrightarrow\,\,\mathcal P\oplus \mathcal C
\end{equation}
is coisometric. It follows that $S$ has a unique meromorphic extension to $\mathbb C_r$, and for $S$ so extended the kernel $K_S$ has still
$\kappa$ negative squares for $z,w$ in the domain of analyticity of $S$. It follows that the space $\mathcal P(S)$ is $R_\alpha$-invariant and that
\begin{equation}
(2{\rm Re}\, \alpha)[R_\alpha f,R_\alpha f]_{\mathcal P(S)}+[R_\alpha f,f]_{\mathcal P(S)}+[f,R_\alpha f]_{\mathcal P(S)}+2\pi[f(\alpha),f(\alpha)]_{\mathcal C}\le 0
\label{malpensa190715}
\end{equation}
holds in $\mathcal P(S)$. Note that this inequality is the counterpart of \eqref{malpensa190715!!!!}, and appeared in \cite[(3.7) in Theorem 3.4]{ad3}.\\

We refer to  \cite{MR3337534} for other results on realizations of Schur functions in a half-plane.
\subsection{The structure theorem}
The main result of this section is the following theorem, which is the counterpart of Theorem \ref{michelle0} in the case of the right half-space.
To prove it, we follow closely the computations in \cite{adrs}. A key tool in the arguments is inequality \eqref{malpensa190715}
\begin{theorem}
\label{michelle1}
Let $\mathcal C$ be a Pontryagin space, and let $\Omega$ be an open subset of $\mathbb C_r$. Let $\alpha\in\Omega$ be fixed.
Let $\mathcal P$ be a reproducing kernel Pontryagin space of $\mathcal C$-valued functions analytic in $\Omega$, which is $R_\alpha$-invariant and such that \eqref{malpensa190715} holds in $\mathcal P$.
Then every function of $\mathcal P$ has a unique meromorphic extension to $\mathbb C_r$ and there exists a Pontryagin space $\mathcal C_1$ with ${\rm ind}_-(\mathcal C_1)={\rm ind}_-(\mathcal C)$ and a function $S\in\mathcal S_\kappa(\mathcal C_1,\mathcal C)$, with $\kappa={\rm ind}_-(\mathcal P)$, such that the reproducing kernel of $\mathcal P$ is of the form
\begin{equation}
\label{sarah123456}
K_S(z,w)=\frac{I_{\mathcal C}-S(z)S(w)^*}{2\pi(z+\overline{w})}.
\end{equation}
\end{theorem}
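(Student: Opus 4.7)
The plan is to parallel the disk-case proof of Theorem \ref{michelle0} in \cite[Chapter 3]{adrs}, with the half-plane realization \eqref{250715toParis} replacing its disk analogue. The first move is to repackage \eqref{malpensa190715} as a contraction estimate. Multiplying \eqref{malpensa190715} by $2\Re\alpha>0$ and adding $[f,f]_{\mathcal P}$ to both sides, the four $\mathcal P$-terms on the left combine as $[(I_{\mathcal P}+(2\Re\alpha)R_\alpha)f,(I_{\mathcal P}+(2\Re\alpha)R_\alpha)f]_{\mathcal P}$, so that \eqref{malpensa190715} is equivalent to
\[
\bigl[(I_{\mathcal P}+(2\Re\alpha)R_\alpha)f,(I_{\mathcal P}+(2\Re\alpha)R_\alpha)f\bigr]_{\mathcal P}+4\pi\Re\alpha\,[f(\alpha),f(\alpha)]_{\mathcal C}\le[f,f]_{\mathcal P}.
\]
In other words, the linear operator
\[
V:\mathcal P\to\mathcal P\oplus\mathcal C,\qquad Vf=\begin{pmatrix}(I_{\mathcal P}+(2\Re\alpha)R_\alpha)f\\ \sqrt{4\pi\Re\alpha}\,f(\alpha)\end{pmatrix}
\]
is a contraction between Pontryagin spaces.

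Write $\phi_\alpha(z)=(z-\alpha)/(z+\overline\alpha)$ for the Cayley map appearing in \eqref{250715toParis}. The second step is to dilate $V$ to a coisometric block matrix of the form \eqref{s-adrs111}: invoking the two Pontryagin extension theorems of Section \ref{sec21}, one constructs a coisometry
\[
W=\begin{pmatrix}T & F\\ G & H\end{pmatrix}:\mathcal P\oplus\mathcal C_1\;\longrightarrow\;\mathcal P\oplus\mathcal C,
\]
where $\mathcal C_1$ is an auxiliary Pontryagin space with $\mathrm{ind}_-(\mathcal C_1)=\mathrm{ind}_-(\mathcal C)$ (so that source and target of $W$ have equal negative index and the extension theorem applies), and whose first block-column $(T,G)^{t}$ records the operator $V$. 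The second block-column $(F,H)^{t}$ arises as a signed factorization of the defect $I-VV^{*}$, produced by passing to the graph of $V^{*}$ and invoking the maximal strictly negative invariant subspace theorem. With $(T,F,G,H)$ in hand, define $S$ by \eqref{250715toParis}: the results of \cite{abds2-jfa} recalled in Section \ref{secqaz} then yield that $S\in\mathcal S_\kappa(\mathcal C_1,\mathcal C)$ with $\kappa=\mathrm{ind}_-(\mathcal P)$, meromorphic on $\mathbb C_r$ (its poles lying exactly where $I-\phi_\alpha(z)T$ fails to be invertible).

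The third step is to match the reproducing kernel of $\mathcal P$ against $K_S$. Using the four coisometric identities encoded in $WW^{*}=I_{\mathcal P\oplus\mathcal C}$, one expands $I_{\mathcal C}-S(z)S(w)^{*}$ through the Neumann series for $(I-\phi_\alpha(z)T)^{-1}$ and telescopes, then applies the half-plane identity
\[
1-\phi_\alpha(z)\overline{\phi_\alpha(w)}=\frac{2\Re\alpha\,(z+\overline w)}{(z+\overline\alpha)(\overline w+\alpha)}
\]
to factor out $2\pi(z+\overline w)$. What remains is exactly the reproducing kernel of $\mathcal P$ evaluated at $(z,w)$, giving the form \eqref{sarah123456}. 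Since a reproducing kernel Pontryagin space is uniquely determined by its kernel, $\mathcal P$ is identified with $\mathcal P(S)$; meromorphic extension of each $f\in\mathcal P$ to $\mathbb C_r$ is then automatic because $\mathcal P(S)$ is already defined on the whole domain of analyticity of $S$ in $\mathbb C_r$.

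The main technical difficulty is the signature bookkeeping in Step 2: one must arrange $\mathrm{ind}_-(\mathcal C_1)=\mathrm{ind}_-(\mathcal C)$ while simultaneously ensuring that the dilation $W$ is genuinely coisometric (not merely contractive) and has $V$ as its first block-column. This is handled by combining the two Pontryagin extension theorems of Section \ref{sec21} along the lines of \cite[pp.~85--90]{adrs}, with the half-plane realization formula \eqref{250715toParis} substituted for the disk realization throughout.
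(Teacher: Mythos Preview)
Your proposal is correct and follows essentially the same route as the paper: your operator $V$ coincides with the paper's $C=\begin{pmatrix}T\\G\end{pmatrix}$, where $T=I_{\mathcal P}+(2\Re\alpha)R_\alpha$ and $G=\sqrt{4\pi\Re\alpha}\,C_\alpha$, and the subsequent steps (defect factorization to obtain a coisometric colligation, definition of $S$ via \eqref{250715toParis}, and kernel matching) are the same. One clarification on Step~2: the paper gets $(F,H)^t$ and the equality $\mathrm{ind}_-(\mathcal C_1)=\mathrm{ind}_-(\mathcal C)$ from the index identity \eqref{ineqfund} together with the Bogn\'ar--Kr\'amli factorization theorem, rather than from the graph of $V^*$ or the maximal strictly negative invariant subspace theorem; the invariant-subspace theorem enters only afterward, to show that $S$ extends meromorphically to all of $\mathbb C_r$.
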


As for Theorem \ref{michelle0} we note that the function $S$ is meromorphic in $\mathbb C_r$, with domain of analyticity $\Omega(S)$. Formula
\eqref{sarah123456} means that elements of $\mathcal P$ are restrictions to $\Omega$ of the elements of the reproducing kernel
Hilbert space $\mathcal P(S)$ with reproducing kernel $K_S(z,w)$.\smallskip

\begin{proof}[Proof of Theorem \ref{michelle1}] We set
\[
k=(2{\rm Re}\, \alpha),\quad
T=kR_\alpha+I_{\mathcal P},\quad G=\sqrt{2\pi k}C_\alpha,\quad\text{and}\quad C=\begin{pmatrix}T\\ G\end{pmatrix},
\]
where $C_\alpha$ denotes the point evaluation at $\alpha$. After multiplying by $2\pi$, inequality \eqref{malpensa190715} may be rewritten as
\[
(kR_\alpha+I_{\mathcal P})^*(kR_\alpha+I_{\mathcal P})+2\pi kC_\alpha^*C_\alpha\le I_{\mathcal P},
\]
that is,
\begin{equation}
\label{bastille}
I_{\mathcal P}-C^*C\ge 0.
\end{equation}
By \cite[Theorem 1.3.4 (1), p. 25]{adrs} we have
\begin{equation}
\label{ineqfund}
{\rm ind}_-(I_{{\mathcal P} \oplus\mathcal C}-CC^*)+{\rm ind}_-(\mathcal P)={\rm ind}_-(I_{{\mathcal P}}-C^*C)+{\rm ind}_-(\mathcal P)+{\rm ind}_-({\mathcal C}).
\end{equation}
Using \eqref{bastille} we get:
\[
{\rm ind}_-(I_{{\mathcal P} \oplus\mathcal C}-CC^*)={\rm ind}_-({\mathcal C}).
\]
By the Bogn\'ar-Kr\'amli  theorem, see \cite[pp. 20-21]{adrs}, there exists a defect operator, that is there exists a
Pontryagin space $\mathcal C_1$ with same negative index as $\mathcal C$ and operators
\begin{equation}
\label{facto2}
F\in\mathbf L(\mathcal  C_1,\mathcal P)\quad\text{and}\quad H\in\mathbf L(\mathcal  C_1,\mathcal C)
\end{equation}
such that
\begin{equation}
\label{FH}
I_{{\mathcal P} \oplus\mathcal C}-CC^*=\begin{pmatrix}F\\ H\end{pmatrix}
\begin{pmatrix}F\\ H\end{pmatrix}^*.
\end{equation}
It follows that the operator matrix
\[
\begin{pmatrix}T&F\\ G&H\end{pmatrix}\,\,:\,\,{\mathcal P} \oplus\mathcal C_1\,\,\rightarrow\,\,{\mathcal P} \oplus\mathcal C
\]
is coisometric. We define $S$ via \eqref{250715toParis} in a neighborhood of the point $\alpha$. This formula defines a meromorphic function in $\mathbb C_r$, as is explained in \cite{adrs} for the disk case. When $\mathcal C$ is a Hilbert space, this follows from the fact that $T$ is then a contraction and has a maximal strictly negative invariant subspace. The case of Pontryagin coefficient spaces is reduced to the Hilbert space case using the Potapov-Ginzburg transform.\\

As in \cite[Theorem 2.1.2 (1), p. 44]{adrs} we have
\[
\dfrac{I_{\mathcal C}-S(z)S(w)^*}{1-\dfrac{z-\alpha}{z+\overline{\alpha}}\dfrac{\overline{w}-\overline{\alpha}}{\overline{w}+\alpha}}=
G\left(I_{\mathcal P}-\frac{z-\alpha}{z+\overline{\alpha}}T\right)^{-1}\left(I_{\mathcal P}-\frac{w-\alpha}{w+\overline{\alpha}}T\right)^{-*}G^*,
\]
which can be rewritten as
\begin{equation}
\label{macau}
(z+\overline{\alpha})(\overline{w}+\alpha)
\dfrac{I_{\mathcal C}-S(z)S(w)^*}{(z+\overline{w})(\alpha+\overline{\alpha})}=2\pi k
C_\alpha\left(I_{\mathcal P}-\frac{z-\alpha}{z+\overline{\alpha}}T\right)^{-1}\left(I_{\mathcal P}-\frac{w-\alpha}{w+\overline{\alpha}}T\right)^{-*}C_\alpha^*.
\end{equation}
To conclude the proof we show that the point evaluation $C_w$ is given by:
\begin{equation}\label{pointeval}
C_w=\frac{\alpha+\overline{\alpha}}{w+\overline{\alpha}}C_\alpha\left(I_{\mathcal P}-\frac{w-\alpha}{w+\overline{\alpha}}T\right)^{-1}.
\end{equation}
To this end, we note that
\begin{equation}
\label{mira}
(Tf)(z)=(kR_\alpha f)(z)+f(z)=\frac{(z+\overline{\alpha})f(z)-(\alpha+\overline{\alpha})f(\alpha)}{z-\alpha}.
\end{equation}
Let now $h\in\mathcal P$ and set
\[
\frac{\alpha+\overline{\alpha}}{w+\overline{\alpha}}\left(I_{\mathcal P}-\frac{w-\alpha}{w+\overline{\alpha}}T\right)^{-1}h=g.
\]
In view of \eqref{mira} we have
\[
\begin{split}
h(z)&=\frac{w+\overline{\alpha}}{\alpha+\overline{\alpha}}\left(g(z)-\frac{w-\alpha}{w+\overline{\alpha}}\frac{(z+\overline{\alpha})g(z)-(\alpha+\overline{\alpha})g(\alpha)}{z-\alpha}
\right).
\end{split}
\]
Setting $z=w$ in the above expression we obtain $h(w)=g(\alpha)$ so \eqref{pointeval} is proved. Thus \eqref{macau} can be rewritten as
\[
\dfrac{I_{\mathcal C}-S(z)S(w)^*}{2\pi(z+\overline{w})}=C_zC_w^*,
\]
and so $K_S(z,w)$ is the reproducing kernel of $\mathcal P$.
\end{proof}

\begin{remark}
{\rm
When $\mathcal C$ is separable the space $\mathcal P$ is also separable since it consists of analytic functions, and $\mathcal C_1$ can be chosen separable.}
\label{sepa}
\end{remark}
\subsection{The case of subspaces of the Hardy space}
\label{subsec2.3}
We now consider the special case where the space $\mathcal P$ (which we now denote by $\mathcal H$) is a Hilbert space isometrically included in the Hardy space $\mathbf H_2(\mathbb C_r)$ of the right half-plane. The following lemma is proved in \cite{ad3},
and implies that \eqref{malpensa190715} is in fact an equality in $\mathbf H_2(\mathbb C_r)$. Its proof is recalled for completeness and since it provides the ground to prove Lemma \ref{Lemma27q}.
\begin{lemma}\label{Lemma27}
Equality \eqref{equadb2} holds in $\mathbf H_2(\mathbb C_r)$.
\end{lemma}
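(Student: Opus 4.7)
The plan is to reduce the problem to a verification on reproducing kernels. Since $\mathbf H_2(\mathbb C_r)$ is a reproducing kernel Hilbert space, point evaluations $f\mapsto f(\alpha)$ and $g\mapsto g(\beta)$ are continuous; and because the adjoint formula $R_\alpha^*=(M_z+\overline{\alpha} I)^{-1}$ quoted just before the lemma shows that $R_\alpha$ and $R_\beta$ are bounded, all four terms of \eqref{equadb2} are continuous sesquilinear forms in $(f,g)$. It therefore suffices to prove the identity when $f=k_a$ and $g=k_b$ with $a,b\in\mathbb C_r$, where $k_w(z)=\dfrac{1}{2\pi(z+\overline{w})}$ is the reproducing kernel, and then to extend by sesquilinearity and density of the span of these kernels.

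The key computation I would carry out is that each reproducing kernel is an eigenfunction of $R_\alpha$: a single partial-fraction step applied to
\[
R_\alpha k_w(z)=\frac{k_w(z)-k_w(\alpha)}{z-\alpha}
\]
yields $R_\alpha k_w=-\dfrac{1}{\alpha+\overline{w}}\,k_w$. Combined with the reproducing identity $\langle k_a,k_b\rangle=k_a(b)=\dfrac{1}{2\pi(b+\overline{a})}$, this turns each of the four terms of \eqref{equadb2} at $f=k_a$, $g=k_b$ into an explicit rational expression sharing the common denominator $2\pi(\alpha+\overline{a})(\overline{\beta}+b)(b+\overline{a})$. After clearing denominators the identity reduces to the trivial linear cancellation
\[
-(\overline{\beta}+b)-(\alpha+\overline{a})+(\alpha+\overline{\beta})+(b+\overline{a})=0,
\]
which settles the kernel case, and by the density argument the general case as well.

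I do not foresee a genuine obstacle: the algebra is routine once the eigenvalue relation for $R_\alpha$ on $k_w$ is in hand, and the only analytic input needed beyond that is the boundedness of $R_\alpha$, which is already on the table. One reason to prefer this RKHS-based approach over a direct manipulation using the adjoint formula $R_\alpha^*=(M_z+\overline{\alpha} I)^{-1}$ is that the same eigenfunction strategy transplants cleanly to the slice hyperholomorphic Hardy space of the right half-space needed for Lemma \ref{Lemma27q}, whereas the disk-style trick of introducing $F=(I+\alpha R_\alpha)f$ in order to reduce \eqref{equadb1} to an evaluation at a distinguished interior base point has no natural analogue in the half-plane geometry, where the kernel $\dfrac{1}{2\pi(z+\overline{w})}$ has no such preferred point.
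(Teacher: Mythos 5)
Your proposal is correct and follows essentially the same route as the paper's proof: the eigenfunction relation $R_\alpha k_w=-(\alpha+\overline{w})^{-1}k_w$, the reduction to an algebraic cancellation on the linear span of the reproducing kernels, and extension by density together with the boundedness of $R_\alpha$ and $R_\beta$. The only cosmetic difference is that the paper justifies the boundedness of $R_\alpha$ directly (via the isometric embedding into $\mathbf L_2(\mathbb R)$ and Cauchy--Schwarz, or via a self-improving estimate obtained from the identity on the dense set) rather than by invoking the adjoint formula $R_\alpha^*=(M_z+\overline{\alpha}I)^{-1}$, whose proof the paper itself says rests on the computations of this very lemma.
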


\begin{proof}
We use the fact that $k(z,w)=\frac{1}{2\pi (z+\overline{w})}$ is the reproducing kernel of $\mathbf H_2(\mathbb C_r)$. We have for $\alpha,\beta,\mu,\nu$ in $\mathbb C_r$
\[
R_\alpha k(\cdot,\mu)=-\frac{1}{\alpha+\overline{\mu}} k(\cdot,\mu)\quad \text{and}\quad R_\alpha k(\cdot,\nu)=-\frac{1}{\alpha+\overline{\nu}} k(\cdot,\nu).
\]
It follows that
\[
\begin{split}
\langle R_\alpha k(\cdot,\mu), R_\beta k(\cdot,\nu)\rangle&=\frac{1}{(\alpha+\overline{\mu})(\overline{\beta}+\nu)}k(\nu,\mu)=
\frac{1}{2\pi(\alpha+\overline{\mu})(\overline{\beta}+\nu)(\nu+\overline{\mu})},\\
\langle R_\alpha k(\cdot,\mu), k(\cdot,\nu)\rangle&=-\frac{1}{\alpha+\overline{\mu}}k(\nu,\mu)=-\frac{1}{2\pi(\alpha+\overline{\mu})(\nu+\overline{\mu})},\\
\langle k(\cdot,\mu), R_\beta k(\cdot,\nu)\rangle&=-\frac{1}{\overline{\beta}+\nu}k(\nu,\mu)=-
\frac{1}{2\pi(\overline{\beta}+\nu)(\nu+\overline{\mu})}.
\end{split}
\]
Furthermore,
\[
2\pi\overline{k(\beta,\nu)}k(\alpha,\mu)=\frac{2\pi}{(2\pi)^2}\frac{1}{(\alpha+\overline{\mu})(\overline{\beta}+\nu)}=
\frac{1}{2\pi}\frac{1}{(\alpha+\overline{\mu})(\overline{\beta}+\nu)}.
\]
Equality \eqref{equadb2} for $f=k(\cdot,\mu)$ and $g=k(\cdot,\nu)$ is thus equivalent to
\[
-\frac{1}{(\alpha+\overline{\mu})(\nu+\overline{\mu})}
-
\frac{1}{(\overline{\beta}+\nu)(\nu+\overline{\mu})}+
\frac{\alpha+\overline{\beta}}{(\alpha+\overline{\mu})(\overline{\beta}+\nu)(\nu+\overline{\mu})}+
\frac{1}{(\alpha+\overline{\mu})(\overline{\beta}+\nu)}=0,
\]
which is clearly an identity. Note also that the linear span of the reproducing kernels form a dense subset of $\mathbf H_2(\mathbb C_r)$. To prove
\eqref{equadb2} for all $f,g\in\mathbf H_2(\mathbb C_r)$ we remark that $R_\alpha$ and $R_\beta$ are bounded there. This latter fact
can be proved in two different ways. First, using the fact that  $\mathbf H_2(\mathbb C_r)$ is isometrically included in the Lebesgue space
$\mathbf L_2(\mathbb R)$, writing
\[
R_\alpha f(z)=\frac{f(z)}{z-\alpha}-\frac{f(\alpha)}{z-\alpha},
\]
and using the Cauchy-Schwarz inequality. The second way (see \cite{ad3}) consists in remarking that \eqref{equadb2} implies, for $f$ in the linear span of the reproducing kernels, that
\[
(2{\rm Re}\,\alpha)\|R_\alpha f\|^2\le2\|R_\alpha f\|\|f\|+2\pi\|C_\alpha\|^2\|f\|^2.
\]
This inequality implies that, on a dense set
\[
\frac{\|R_\alpha f\|^2}{\|f\|^2}\le\frac{1}{{\rm Re}\,\alpha}\left(\frac{\|R_\alpha f\|}{\|f\|}+\pi\|C_\alpha\|^2\right),
\]
which in turn, implies that $R_\alpha$ extends to a bounded operator in $\mathbf H_2(\mathbb C_r)$.
\end{proof}

In the notation of Theorem \ref{michelle1} let us set $\mathcal C=\mathbb C$, and hence $\mathcal C_1$ is a Hilbert space. Thus there exists a function $S\in\mathcal S_0(\mathcal C_1,\mathbb C)$ such that $\mathcal H$ is the reproducing kernel
Hilbert space with reproducing kernel $\dfrac{1-S(z)S(w)^*}{2\pi(z+\overline{w})}$. For every $z\in\mathbb C$ the value $S(z)$ is a bounded
operator (in fact a contraction) from $\mathcal C_1$ into $\mathbb C$, which (since $\mathcal C_1$ is separable; see Remark \ref{sepa}) we will write in matrix form as
\[
\begin{pmatrix}s_1(z)&s_2(z)&\cdots\end{pmatrix},
\]
after choosing an orthonormal basis of $\mathcal C_1$. By the properties of a vector-valued analytic function
(see \cite{MR751959}) each of the functions $s_j$ is analytic. Since
\[
\frac{1}{2\pi(z+\overline{w})}=\frac{1-S(z)S(w)^*}{2\pi(z+\overline{w})}+\frac{S(z)S(w)^*}{2\pi(z+\overline{w})}
\]
and since $\mathcal H$ is isometrically included in $\mathbf H_2(\mathbb C_r)$, we have
\[
\left\langle \frac{1-S(z)S(w)^*}{2\pi(z+\overline{w})},\frac{S(z)S(v)^*}{2\pi(z+\overline{v})}\right\rangle_{\mathbf H_2(\mathbb C_r)}=0,\quad\forall
v,w\in\mathbb C_r.
\]
Using the reproducing kernel property (or Cauchy's theorem) we have
\[
\left\langle \frac{S(w)^*}{2\pi(z+\overline{w})}, \frac{S(v)^*}{2\pi(z+\overline{v})}\right\rangle_{\mathbf H_2(\mathbb C_r)\otimes\mathcal C_1}=
\left\langle \frac{S(z)S(w)^*}{2\pi(z+\overline{w})},\frac{S(z)S(v)^*}{2\pi(z+\overline{v})}\right\rangle_{\mathbf H_2(\mathbb C_r)},
\quad\forall v,w\in\mathbb C_r,
\]
where we denote by $\mathbf H_2(\mathbb C_r)\otimes\mathcal C_1$ the Hardy space of $\mathcal C_1$-valued functions. It follows that the operator of multiplication by $S$ is an isometry from the closed linear span in $\mathbf H_2(\mathbb C_r)\otimes\mathcal C_1$ of the functions
\[
z\,\mapsto\,\frac{S(w)^*}{2\pi(z+\overline{w})},\quad w\in\mathbb C_r,
\]
into $\mathbf H_2(\mathbb C_r)$. Let $j\in\mathbb N$ be such that $s_j\not\equiv 0$. Then the above isometry property implies that
the operator of multiplication by $s_j$ is an isometry from $\mathbf H_2(\mathbb C_r)$ into itself. By the arguments in the scalar setting
it follows that $s_j$ is inner and hence all the other $s_k$, $k\not=j$ are identically equal to $0$. Thus we can chose $\mathcal C_1=\mathbb
C$, and we obtain that $\mathcal H$ is the reproducing kernel Hilbert space with reproducing kernel $\dfrac{1-s_j(z)\overline{s_j(w)}}{2\pi
(z+\overline{w})}$, which means that $\mathcal H^\perp=s_j\mathbf H_2(\mathbb C_r)$. We thus get back to the scalar version of the
Beurling-Lax theorem.\\

More generally, consider a matrix $J\in\mathbb C^{n\times n}$, which is both self-adjoint and unitary:  $J=J^*=J^{-1}$ (such a matrix is called a signature matrix). Define $\mathbf H_2(\mathbb C_r,J)$ to be the space $\mathbf H_2(\mathbb C_r)^n$ endowed with the form
\[
[f,g]_J=\langle f,Jg\rangle_{\mathbf H_2(\mathbb C_r)^n},\quad f,g\in\mathbf H_2(\mathbb C_r)^n.
\]
As a side remark, note that $\mathbf H_2(\mathbb C_r,J)$ is a Krein space.
The identity \eqref{equadb2} holds in $\mathbf H_2(\mathbb C_r,J)$, when the coefficient space $\mathbb C^n$ is endowed with the form
\[
[c,d]_J=d^*Jc,\quad c,d\in\mathbb C^n.
\]
Theorem \ref{michelle1} gives then the characterization of spaces of the form $S\mathbf H_2(\mathbb C_r,J)$. See \cite{ballhelton,bh35} for related results.

\section{A unified setting in the complex case}
\setcounter{equation}{0}

In this section we show how we case use a unified setting to treat both the case of the unit disk and the case of the right half-plane.
We first briefly recall the setting developed in the papers \cite{ad-laa-herm,ad-jfa} and related papers.

\subsection{A unified setting}
The starting point is to consider an open connected subset $\Omega$ of the complex plane, and a pair of functions $a(z)$ and $b(z)$ analytic in $\Omega$ and such that the two sets
\[
\Omega_+=\left\{z\in\Omega\,\,;\,\,|b(z)|<|a(z)|\right\}\quad \text{and}\quad \Omega_-=\left\{z\in\Omega\,\,;\,\,|b(z)|>|a(z)|\right\}
\]
are both nonempty. Then the "boundary" set
\[
\Omega_0=\left\{z\in\Omega\,\,;\,\,|b(z)|=|a(z)|\right\}
\]
is also nonempty.

We set
\begin{equation}
\label{rho}
\rho(z,w)=a(z)\overline{a(w)}-b(z)\overline{b(w)}
\end{equation}
and
\[
\delta(z,w)=b(z)a(\alpha)-a(z)b(\alpha).
\]
The representation of $\rho$ is unique up to an hyperbolic translation, that is, if we have two different representations of $\rho$ as \eqref{rho},
\[
\rho(z,w)=a(z)\overline{a(w)}-b(z)\overline{b(w)}=c(z)\overline{c(w)}-d(z)\overline{d(w)},
\]
then
\[
\begin{pmatrix}a(z)&b(z)\end{pmatrix}=\begin{pmatrix}c(z)&d(z)\end{pmatrix}U
\]
where the matrix $U$ is such that
\[
UJ_0U^*=J_0,\quad\text{where}\quad J_0=\begin{pmatrix}1&0\\0&-1\end{pmatrix},
\]
and in particular the sets $\Omega_+,\Omega_-$ and $\Omega_0$ do not depend on the given representation of $\rho$.\smallskip

One introduces the resolvent-like operators
\[
(R(a,b,\alpha)f)(z)=\frac{a(z)f(z)-a(\alpha)f(\alpha)}{a(\alpha)b(z)-b(\alpha)a(z)}.
\]
We note (see \cite[equation (3.14), p. 9]{ad-jfa}) that
\begin{equation}
\label{rarb1}
a(\alpha)R(b,a,\alpha)+b(\alpha)R(a,b,\alpha)=-I.
\end{equation}

The case $a(z)=1$ and $b(z)=z$ corresponds to $\Omega_+=\mathbb D$, while the case $a(z)=\sqrt{2\pi}\frac{z+1}{2}$ and
$b(z)=\sqrt{2\pi}\frac{z-1}{2}$ corresponds to $\Omega_+=\mathbb C_r$.\\

Functions of the form \eqref{rho} seem to have been considered first in the papers \cite{lak} and \cite{nudel-93}. We also refer to \cite{adlv}
for a recent application to the Schur algorithm, and to \cite{ab7} for a sample application to interpolation.

\subsection{The Hardy space}

The function $\dfrac{1}{\rho(z,w)}=\dfrac{1}{a(z)\overline{a(w)}-b(z)\overline{b(w)}}$ is positive definite in $\Omega_+$. The associated reproducing kernel
Hilbert space will be denoted by $\mathbf H_2(\rho)$. Let $\sigma(z)=\dfrac{b(z)}{a(z)}$. For a given representation $\rho(z,w)=a(z)\overline{a(w)}-b(z)\overline{b(w)}$ of $\rho$ we have
\begin{equation}
\label{PMH2}
\mathbf H_2(\rho)=\left\{f(z)=\frac{F(\sigma(z))}{a(z)},\,\, F\in\mathbf H_2(\mathbb D)\right\}
\end{equation}
with norm  $\|f\|=\|F\|$.
\\
The following result is contained in \cite[Theorem 4.4]{ad-jfa}.
\begin{proposition}
\label{hannah}
The equality
\begin{equation}
\label{adjfa}
\begin{split}
\langle R(a,b,\alpha)f, R(a,b,\beta)g\rangle_{\mathbf H_2(\rho)}-\langle R(b,a,\alpha)f, R(b,a,\beta)g\rangle_{\mathbf H_2(\rho)}+\overline{g(\beta)}f(\alpha)=0,
\end{split}
\end{equation}
holds in $\mathbf H_2(\rho)$.
\end{proposition}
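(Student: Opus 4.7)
The plan is to pull the identity back to $\mathbf H_2(\mathbb D)$ via the isometric isomorphism
\[
\Psi\colon\mathbf H_2(\mathbb D)\longrightarrow\mathbf H_2(\rho),\qquad (\Psi F)(z)=\frac{F(\sigma(z))}{a(z)},
\]
recorded in \eqref{PMH2} (for a fixed representation $\rho(z,w)=a(z)\overline{a(w)}-b(z)\overline{b(w)}$ and with $\sigma=b/a$), and recognize the resulting statement as the already established identity \eqref{equadb1}.

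The first step is to compute the conjugates of $R(a,b,\alpha)$ and $R(b,a,\alpha)$ under $\Psi$. From the factorization
\[
a(\alpha)b(z)-b(\alpha)a(z)=a(\alpha)a(z)\bigl(\sigma(z)-\sigma(\alpha)\bigr)
\]
and $a(z)(\Psi F)(z)=F(\sigma(z))$, a direct calculation yields
\[
R(a,b,\alpha)\Psi F=\frac{1}{a(\alpha)}\,\Psi\bigl(R_{\sigma(\alpha)}F\bigr).
\]
For $R(b,a,\alpha)$, the numerator $b(z)(\Psi F)(z)=\sigma(z)F(\sigma(z))$ is split as
\[
\sigma(z)F(\sigma(z))-\sigma(\alpha)F(\sigma(\alpha))=\sigma(z)\bigl(F(\sigma(z))-F(\sigma(\alpha))\bigr)+\bigl(\sigma(z)-\sigma(\alpha)\bigr)F(\sigma(\alpha)),
\]
which, after dividing by $b(\alpha)a(z)-a(\alpha)b(z)=-a(\alpha)a(z)(\sigma(z)-\sigma(\alpha))$ and simplifying, gives
\[
R(b,a,\alpha)\Psi F=-\frac{1}{a(\alpha)}\,\Psi\bigl((I+\sigma(\alpha)R_{\sigma(\alpha)})F\bigr).
\]
One checks that these two formulas are compatible with \eqref{rarb1}, and in the disk case $a=1$, $b=z$ they reduce to $R_\alpha$ and $-(I+\alpha R_\alpha)$ respectively.

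The second step is substitution. Write $f=\Psi F$ and $g=\Psi G$, and set $\alpha'=\sigma(\alpha)\in\mathbb D$, $\beta'=\sigma(\beta)\in\mathbb D$. Because $\Psi$ is an isometry and $f(\alpha)=F(\alpha')/a(\alpha)$, multiplying \eqref{adjfa} by $a(\alpha)\overline{a(\beta)}$ and cancelling the common factor transforms it into
\[
\langle R_{\alpha'}F,R_{\beta'}G\rangle_{\mathbf H_2(\mathbb D)}-\langle(I+\alpha'R_{\alpha'})F,(I+\beta'R_{\beta'})G\rangle_{\mathbf H_2(\mathbb D)}+\overline{G(\beta')}F(\alpha')=0.
\]
Expanding the second inner product and rearranging gives
\[
\langle F,G\rangle+\alpha'\langle R_{\alpha'}F,G\rangle+\overline{\beta'}\langle F,R_{\beta'}G\rangle-(1-\alpha'\overline{\beta'})\langle R_{\alpha'}F,R_{\beta'}G\rangle-\overline{G(\beta')}F(\alpha')=0,
\]
which is precisely \eqref{equadb1} at the points $\alpha',\beta'\in\mathbb D$, and hence holds.

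The main obstacle is purely bookkeeping: the additive constant $F(\sigma(\alpha))$ that appears when one pulls $R(b,a,\alpha)$ back through $\Psi$ must be folded correctly into the factor $(I+\sigma(\alpha)R_{\sigma(\alpha)})$, and the sign coming from $b(\alpha)a(z)-a(\alpha)b(z)=-(a(\alpha)b(z)-b(\alpha)a(z))$ must be tracked; once this is done, the two inner products combine so as to produce exactly the cross-term $-(1-\alpha'\overline{\beta'})\langle R_{\alpha'}F,R_{\beta'}G\rangle$ required by \eqref{equadb1}.
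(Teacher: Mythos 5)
Your proof is correct and follows essentially the same route as the paper: both pull the identity back to $\mathbf H_2(\mathbb D)$ via $f(z)=F(\sigma(z))/a(z)$, compute that $R(a,b,\alpha)$ and $R(b,a,\alpha)$ correspond (up to the factor $1/a(\alpha)$ and a sign) to $R_{\sigma(\alpha)}$ and $I+\sigma(\alpha)R_{\sigma(\alpha)}$, and then invoke \eqref{equadb1} at the points $\sigma(\alpha),\sigma(\beta)\in\mathbb D$. The only cosmetic difference is that the paper writes the second conjugated operator as $R_{\sigma(\alpha)}(zF)$ and then applies the identity $(R_u(zF))(z)=F(z)+u(R_uF)(z)$, whereas you split the numerator directly; the computations are identical.
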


\begin{proof}
Let $f(z)=\frac{F(\sigma(z))}{a(z)}\in\mathbf H_2(\rho)$.
We first note the formulas
\begin{eqnarray}
\label{Rab1}
(R(a,b,\alpha)f)(z)&=&\frac{(R_{\sigma(\alpha)}F)(\sigma(z))}{a(\alpha)a(z)}\\
(R(b,a,\alpha)f)(z)&=&-\frac{(R_{\sigma(\alpha)}zF)(\sigma(z))}{a(\alpha)a(z)}.
\end{eqnarray}
Therefore, with $g(z)=\frac{G(\sigma(z))}{a(z)}$ another element in $\mathbf H_2(\rho)$, and using the formula
\[
(R_u(zf))(z)=f(z)+u(R_uf)(z),
\]
we have
\[
\begin{split}
\langle R(a,b,\alpha)f, R(a,b,\beta)g\rangle_{\mathbf H_2(\rho)}-
\langle R(b,a,\alpha)f, R(b,a,\beta)g\rangle_{\mathbf H_2(\rho)}+\overline{g(\beta)}f(\alpha)&=\\
&\hspace{-10cm}=\frac{1}{a(\alpha)\overline{a(\beta)}}\langle R_{\sigma(\alpha)}F,R_{\sigma(\beta)}G\rangle_{\mathbf H_2(\mathbb D)}-\\
&\hspace{-9.5cm}-
\frac{1}{a(\alpha)\overline{a(\beta)}}\langle F+\sigma(\alpha)R_{\sigma(\alpha)}F,G+\sigma(\beta)\ R_{\sigma(\beta)}G\rangle_{\mathbf H_2(\mathbb D)}+
\overline{g(\beta)}f(\alpha),
\end{split}
\]
that is,
\[
\begin{split}
\langle R_{\sigma(\alpha)}F, R_{\sigma(\beta)}G\rangle_{\mathbf H_2(\mathbb D)}-
\langle F+\sigma(\alpha)R_{\sigma(\alpha)}F,G+\sigma(\beta) R_{\sigma(\beta)}G\rangle_{\mathbf H_2(\mathbb D)}+
\overline{G(\sigma(\beta))}F(\sigma(\alpha)),
\end{split}
\]
which is equal to $0$ by\eqref{equadb1}.
\end{proof}

Both \eqref{equadb1} and \eqref{equadb2} are special cases of \eqref{adjfa} (see \cite[equation (4.2), p. 12]{ad-jfa} for the latter
equation). We will weaken this equality to the requirement
\begin{equation}
\label{ineq1234567}
\langle R(a,b,\alpha)f, R(a,b,\alpha)f\rangle-\langle R(b,a,\alpha)f, R(b,a,\alpha)f\rangle+|f(\alpha)|^2\le0.
\end{equation}

\subsection{Generalized Schur functions}

As in Section \ref{secqaz},  we consider $\mathcal C$ and $\mathcal D$ two
Pontryagin spaces with the same index of negativity.
A $\mathbf L(\mathcal D,\mathcal C)$-valued function $S$ analytic in some open subset of $\Omega_+\subset\Omega$
is called a generalized Schur function if the $\mathbf L(\mathcal  C,\mathcal C)$-valued kernel
\begin{equation}
\label{michelle12345678!!!!!!!!!!!!}
K_S(z,w)=\frac{I_{\mathcal C}-S(z)S(w)^*}{a(z)\overline{a(w)}-b(z)\overline{b(w)}},\quad z,w\in\Omega.
\end{equation}
has a finite number of negative squares, say $\kappa$, in $\Omega$.

\begin{proposition}\label{P32}
Using the above notation, let $\alpha\in\Omega_+$.
A function $S$ is a generalized Schur function if and only if it can be written in the form
\begin{equation}
\label{monique}
S(z)=H+\frac{\sigma(z)-\sigma(\alpha)}{1-\sigma(z)\overline{\sigma(\alpha)}}G\left(I_{\mathcal P}-\frac{\sigma(z)-\sigma(\alpha)}{1-\sigma(z)\overline{\sigma(\alpha)}}T\right)^{-1}F,
\end{equation}
where $\mathcal P$ is a Pontryagin space with index of negativity $\kappa$ and where the operator-matrix
\eqref{s-adrs111} is coisometric.
\end{proposition}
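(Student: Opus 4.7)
The plan is to reduce the proposition to the corresponding realization theorem on the unit disk, proved in \cite[Theorem 2.1.2, p.~44]{adrs}, via the contractive substitution $u=\sigma(z)=b(z)/a(z)$, which maps $\Omega_+$ into $\mathbb{D}$ and sends $\alpha$ to $\sigma(\alpha)\in\mathbb{D}$. The starting observation is the kernel factorization
\[
K_S(z,w)=\frac{1}{a(z)\overline{a(w)}}\cdot\frac{I_{\mathcal C}-S(z)S(w)^*}{1-\sigma(z)\overline{\sigma(w)}},
\]
which follows from $\rho(z,w)=a(z)\overline{a(w)}(1-\sigma(z)\overline{\sigma(w)})$. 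The first factor is a rank-one positive definite kernel on $\Omega_+$, so it does not affect the negative-square count: $K_S$ has $\kappa$ negative squares on $\Omega_+$ if and only if the ``disk-type'' kernel on the right does.

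For the sufficiency direction, I would define $\widetilde{S}$ on $\mathbb{D}$ by the same formula \eqref{monique} with $u$ in place of $\sigma(z)$. Applying the disk realization theorem, centered at $\sigma(\alpha)$ through the disk automorphism sending $0$ to $\sigma(\alpha)$, shows that $\widetilde{S}$ is a generalized Schur function on $\mathbb{D}$ whose disk kernel has $\kappa$ negative squares. Since $S(z)=\widetilde{S}(\sigma(z))$ and $\sigma(\Omega_+)\subset\mathbb{D}$, the factorization above transports this to $K_S$.

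For the necessity direction, I would mimic the proof of Theorem \ref{michelle1}. Take $\mathcal{P}=\mathcal{P}(S)$, the reproducing kernel Pontryagin space of $K_S$, of negative index $\kappa$; set $G=C_\alpha$ (point evaluation at $\alpha$); and introduce an operator $T$ on $\mathcal{P}$ playing the role of the M\"obius-twisted backward shift in the variable $\sigma_\alpha(z)=(\sigma(z)-\sigma(\alpha))/(1-\sigma(z)\overline{\sigma(\alpha)})$. The invariance of $\mathcal{P}$ under $R(a,b,\alpha)$ and $R(b,a,\alpha)$, together with the identity \eqref{rarb1} and the operator-valued version of \eqref{ineq1234567}, allow one to rewrite the hypothesis as the contractivity estimate $I-C^*C\ge 0$ for $C=\binom{T}{G}$. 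The Bogn\'ar--Kr\'amli defect theorem then produces a Pontryagin space $\mathcal{C}_1$ with $\mathrm{ind}_-(\mathcal{C}_1)=\mathrm{ind}_-(\mathcal{C})$ and operators $F$, $H$ completing $C$ to the coisometric matrix of \eqref{s-adrs111}. A computation parallel to \eqref{macau}--\eqref{pointeval}, now carried out in the M\"obius variable $\sigma_\alpha$, verifies that the $S$ defined by \eqref{monique} reproduces $K_S$.

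The main obstacle is the construction of the operator $T$ when $\sigma$ is not injective. One remedy is to work locally on open pieces of $\Omega_+$ on which $\sigma$ admits an analytic inverse (such pieces exist since $\sigma$ is analytic and non-constant, both $\Omega_+$ and $\Omega_-$ being nonempty), construct the realization on each, and glue by the essential uniqueness of the defect operator. A cleaner alternative, in line with the framework of this section, is to define $T$ intrinsically in terms of $R(a,b,\alpha)$ and $R(b,a,\alpha)$ by means of \eqref{rarb1}, bypassing any explicit inverse of $\sigma$; this is the path taken in \cite{ad-jfa}.
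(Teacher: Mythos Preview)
Your proposal is correct in outline, and your sufficiency direction matches the paper's. For the necessity direction, however, the paper takes a shorter route than yours. Rather than building the colligation directly from $\mathcal P(S)$ via the Bogn\'ar--Kr\'amli argument (which is essentially the proof of the later structure theorem, Theorem~\ref{michelle00000}), the paper invokes \cite[Proof of Theorem~2.4]{abds2-jfa} to write $S(z)=M(\sigma(z))$ for a generalized Schur function $M$ of the \emph{disk} with the same number $\kappa$ of negative squares. It then applies the disk realization result (Theorem~\ref{michelle0}) to $M\circ b_{\sigma(\alpha)}$, which is analytic at the origin, obtaining $M(b_{\sigma(\alpha)}(z))=H+zG(I_{\mathcal P}-zT)^{-1}F$ with a coisometric colligation; substituting $z\mapsto b_{-\sigma(\alpha)}(z)$ and then $z\mapsto\sigma(z)$ yields~\eqref{monique}.

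This buys two things. First, the non-injectivity of $\sigma$ that you flag as the main obstacle never arises: once $M$ exists on the disk, the realization is carried out there and merely composed with $\sigma$ at the end---no local inverses, no gluing. Second, your route presupposes the $R(a,b,\alpha)$-invariance of $\mathcal P(S)$ and the inequality~\eqref{ineq1234567!!!}, which in the paper's logical order constitute Proposition~\ref{P33}, proved \emph{after} the present proposition and in fact by means of the factorization $S=M\circ\sigma$ obtained here; so you would need an independent proof of those facts to avoid circularity. Your approach is not wrong---it is the self-contained structure-theoretic argument the paper reserves for Theorem~\ref{michelle00000}---but for Proposition~\ref{P32} the reduction to the disk via \cite{abds2-jfa} is considerably more economical.
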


\begin{proof} By \cite[Proof of Theorem 2.4, p. 44]{abds2-jfa} (see in particular equations (2.17) and (2.18)) the function $S$ can be written
as $S(z)=M(\sigma(z))$, where $M$ is a  $\mathbf L(\mathcal D,\mathcal C)$-valued  generalized Schur function of the open unit disk
(analytic in, say, $\Omega_M\subset\mathbb D$), such that the kernel $K_M(z,w)=\frac{I-M(z)M(w)^*}{1-z\overline{w}}$ has the same number $\kappa$ of negative squares as $K_S(z,w)$ (for the proof of this fact see \cite[Theorem 1.1.4]{adrs}). Let
$u\in\mathbb D$ be a point in a neighborhood of which the function $M$ is analytic, and let $b_u(z)=\frac{z+u}{1+z\overline{u}}$. The function $M_u(z)=M(b_u(z))$ is a generalized Schur function of the open unit disk, analytic in a neighborhood of the origin. By
Theorem \ref{michelle0} it can be written as $M(b_u(z))=H+zG(I_{\mathcal P}-zT)^{-1}F$, where the space $\mathcal P$ and the operators
$T,F,G,H$ are as in \eqref{monique}. We can take $u=\sigma(\alpha)$ since $S(z)=M(\sigma(z)$ is analytic in an neighborhood of $\alpha$).
and replacing $z$ by $b_{-\sigma(\alpha)}(z)$ we have (since $b_u(b_{-u}(z))\equiv z$ for any $u\in\mathbb D$)
\[
M(z)=H+\frac{z-\sigma(\alpha)}{1-z\overline{\sigma(\alpha)}}G\left(I_{\mathcal P}-
\frac{z-\sigma(\alpha)}{1-z\overline{\sigma(\alpha)}}T\right)^{-1}F,
\]
The result follows by replacing $z$ by $\sigma(z)$.
\end{proof}

Still from \cite{abds2-jfa} and \cite{adrs} we have:

\begin{proposition}\label{P33}
Let $S$ be a  $\mathbf L(\mathcal D,\mathcal C)$-valued  generalized Schur function, analytic in some open subset of $\Omega_+\subset\Omega$, and with associated reproducing kernel Pontryagin space
$\mathcal P(S)$. Then, $\mathcal P(S)$ is $R(a,b,\alpha)$-invariant and the inequality
\begin{equation}
\label{ineq1234567!!!}
[R(a,b,\alpha)f, R(a,b,\alpha)f]_{\mathcal P(S)}-[R(b,a,\alpha)f, R(b,a,\alpha)f]_{\mathcal P(S)}+[f(\alpha),f(\alpha)]_{\mathcal C}\le0
\end{equation}
holds in it.
\end{proposition}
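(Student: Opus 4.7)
The plan is to reduce everything to the disk case (Theorem \ref{michelle0} together with its counterpart \eqref{radisk} at an arbitrary point) via the composition $S(z)=M(\sigma(z))$ furnished by Proposition \ref{P32}. The key ingredients are already present: the relation $\sigma(z)=b(z)/a(z)$, the identity $a(\alpha)b(z)-b(\alpha)a(z)=a(\alpha)a(z)(\sigma(z)-\sigma(\alpha))$, and the translation formulas \eqref{Rab1} between $R(a,b,\alpha)$, $R(b,a,\alpha)$ and $R_{\sigma(\alpha)}$.

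First I would observe that the kernels factor as
\[
K_S(z,w)=\frac{I_{\mathcal C}-M(\sigma(z))M(\sigma(w))^*}{a(z)\overline{a(w)}\bigl(1-\sigma(z)\overline{\sigma(w)}\bigr)}=\frac{1}{a(z)\overline{a(w)}}\,K_M(\sigma(z),\sigma(w)).
\]
This identifies $\mathcal P(S)$ isometrically with $\mathcal P(M)$ via $f(z)=F(\sigma(z))/a(z)$ and $\|f\|_{\mathcal P(S)}=\|F\|_{\mathcal P(M)}$; this identification is of the same form used in \eqref{PMH2}, and its validity for reproducing kernel Pontryagin spaces is standard (cf.\ \cite{abds2-jfa,adrs}). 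Since $\sigma(\alpha)\in\mathbb D$ and $\mathcal P(M)$ is $R_{\sigma(\alpha)}$-invariant by Theorem \ref{michelle0}, the formulas \eqref{Rab1} immediately give the $R(a,b,\alpha)$-invariance of $\mathcal P(S)$.

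Next I would translate the inequality. Using $R_u(zF)=F+uR_uF$, the second identity in \eqref{Rab1} reads
\[
(R(b,a,\alpha)f)(z)=-\frac{\bigl((I+\sigma(\alpha)R_{\sigma(\alpha)})F\bigr)(\sigma(z))}{a(\alpha)a(z)}.
\]
Therefore, under the isometric identification,
\[
\|R(a,b,\alpha)f\|_{\mathcal P(S)}^{2}=\tfrac{1}{|a(\alpha)|^{2}}\|R_{\sigma(\alpha)}F\|_{\mathcal P(M)}^{2},\qquad \|R(b,a,\alpha)f\|_{\mathcal P(S)}^{2}=\tfrac{1}{|a(\alpha)|^{2}}\|(I+\sigma(\alpha)R_{\sigma(\alpha)})F\|_{\mathcal P(M)}^{2},
\]
and $[f(\alpha),f(\alpha)]_{\mathcal C}=|a(\alpha)|^{-2}[F(\sigma(\alpha)),F(\sigma(\alpha))]_{\mathcal C}$. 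Applying the disk-case inequality \eqref{radisk} at the point $\sigma(\alpha)\in\mathbb D$ to $F\in\mathcal P(M)$ and dividing by $|a(\alpha)|^{2}$ yields \eqref{ineq1234567!!!} at once.

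The only delicate point, and therefore the step that deserves careful writing, is the verification that the pullback by $\sigma$ together with the multiplicative factor $1/a(z)$ really produces an isometric bijection between $\mathcal P(M)$ and $\mathcal P(S)$ (not only an equality of kernels). This is where the assumption that $S$ admits the realization \eqref{monique} is used: that realization guarantees that $M$ extends meromorphically to $\mathbb D$ with $K_M$ having the same $\kappa$ negative squares as $K_S$, so the reproducing kernel Pontryagin spaces have equal indices and the factorization of kernels transfers to an isometric identification of spaces. Everything else is just the algebra of \eqref{Rab1} and the resolvent identity.
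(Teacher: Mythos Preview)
Your proposal is correct and follows essentially the same route as the paper: factor the kernel as $K_S(z,w)=\dfrac{1}{a(z)\overline{a(w)}}K_M(\sigma(z),\sigma(w))$, use the resulting isometric identification $f\leftrightarrow F$ between $\mathcal P(S)$ and $\mathcal P(M)$, translate $R(a,b,\alpha)$ and $R(b,a,\alpha)$ into $R_{\sigma(\alpha)}$ and $I+\sigma(\alpha)R_{\sigma(\alpha)}$ via \eqref{Rab1}, and conclude from the disk inequality \eqref{radisk}. The only cosmetic difference is that the paper derives the expression for $R(b,a,\alpha)f$ from \eqref{rarb1} rather than from the second line of \eqref{Rab1}, and it spells out that \eqref{radisk} itself follows from \eqref{malpensa190715!!!!} by the substitution $F\mapsto (I+\sigma(\alpha)R_{\sigma(\alpha)})F$.
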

\begin{proof}
With $M$ as in the preceding proposition we have
\[
\frac{I-S(z)S(w)^*}{a(z)\overline{a(w)}-b(z)\overline{b(w)}}=\frac{1}{a(z)}\frac{I-M(\sigma(z))M(\sigma(w))^*}{1-\sigma(z)
\overline{\sigma(w)}}\frac{1}{\overline{a(w)}}.
\]
It follows that (compare with \eqref{PMH2})
\begin{equation}
\label{PSPM}
\mathcal P(S)=\left\{f(z)=\frac{F(\sigma(z))}{a(z)},\,\, F\in\mathcal P(M)\right\},
\end{equation}
with inner product defined by
\begin{equation}
\label{PSPM1}
[f,g]_{\mathcal P(S)}=[F,G]_{\mathcal P(M)},\,\,\,\text{with}\,\,\, g(z)=\frac{G(\sigma(z))}{a(z)},\,\, G\in\mathcal P(M).
\end{equation}
It follows that \eqref{Rab1} still holds. Taking into account \eqref{rarb1}, we have
\[
\begin{split}
(R(b,a,\alpha)f)(z)&=-\left(\frac{f+b(\alpha)R(a,b,\alpha)f}{a(\alpha)}\right)(z)\\
&=-\frac{\frac{F(\sigma(z))}{a(z)}+b(\alpha)\frac{(R_{\sigma(\alpha)}F)(\sigma(z))}{a(\alpha)a(z)}}{a(\alpha)}\\
&=-\frac{F(\sigma(z))+\sigma(\alpha)R_{\sigma(\alpha)}F(\sigma(z))}{a(z)a(\alpha)},
\end{split}
\]
and we see that \eqref{ineq1234567!!!} is equivalent to:
\begin{equation}
\begin{split}
\frac{1}{|a(\alpha)|^2}[R_{\sigma(\alpha)}F,R_{\sigma(\alpha)}F]_{\mathcal P(S)}-[\frac{F+\sigma(\alpha)R_{\sigma(\alpha)}F}{a(\alpha)},
\frac{F+\sigma(\alpha)R_{\sigma(\alpha)}F}{a(\alpha)}]_{\mathcal P(S)}+&\\
&\hspace{-8cm}+
\frac{1}{|a(\alpha)|^2}[F(\sigma(\alpha)),F(\sigma(\alpha))]_{\mathcal C}\le 0,\quad F\in\mathcal P(M).
\end{split}
\end{equation}
This last inequality is \eqref{radisk}. To conclude, it suffices to remark that \eqref{radisk} follows from \eqref{malpensa190715!!!!} in
$\mathcal P(M)$,
\begin{equation}
\label{Clothilde!!!}
[R_0 F,R_0F]_{\mathcal P(M)}\le[F,F]_{\mathcal P(M)}-[F(0),F(0)]_{\mathcal C},\quad \forall f\in\mathcal P(S),
\end{equation}
as is seen by setting in \eqref{Clothilde!!!} $(I+\sigma(\alpha)R_{\sigma(\alpha)})F$ instead of $F$.
\end{proof}
\subsection{The structure theorem}
The following theorem contains as special cases Theorems \ref{michelle0} and \ref{michelle1}.  Note that its proof relies on Theorem \ref{michelle0}.

\begin{theorem}
Let $\mathcal C$ be a Pontryagin space, and let $\Omega$ be an open subset of the open complex plane.
Let $\mathcal P$ be a reproducing kernel Pontryagin space of $\mathcal C$-valued functions analytic in $\Omega$, which is $R_0$-invariant and such that \eqref{ineq1234567!!!} holds in $\mathcal P$. Then every elements of $\mathcal P$ has a unique meromorphic extension to $\Omega_+$, and there exists a Pontryagin space $\mathcal C_1$ with ${\rm ind}_-(\mathcal C_1)={\rm ind}_-(\mathcal C)$ and a function $S\in\mathcal S_\kappa(\mathcal C_1,\mathcal C)$, with $\kappa={\rm ind}_-(\mathcal P)$, such that the reproducing kernel of the space $\mathcal P$ is of the form \eqref{michelle12345678!!!!!!!!!!!!}.
\label{michelle00000}
\end{theorem}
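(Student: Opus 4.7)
The plan is to reduce Theorem \ref{michelle00000} to its disk version, Theorem \ref{michelle0}, by transporting everything through the substitution $w=\sigma(z)$ with $\sigma=b/a$. Since $\sigma$ sends $\Omega_+$ into $\mathbb D$, a transfer map
\[
\Phi:\mathcal P\longrightarrow \tilde{\mathcal P},\qquad (\Phi f)(w)=a(\sigma^{-1}(w))\,f(\sigma^{-1}(w)),
\]
defined on the image $\tilde\Omega=\sigma(\Omega\cap\Omega_+)\subset\mathbb D$ (shrinking $\Omega$ if necessary so that $\sigma$ is a biholomorphism onto $\tilde\Omega$), is the natural inverse of the assignment $F\mapsto F(\sigma(\cdot))/a(\cdot)$ appearing in \eqref{PSPM}. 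Equipping $\tilde{\mathcal P}$ with the Hermitian form $[\Phi f_1,\Phi f_2]_{\tilde{\mathcal P}}:=[f_1,f_2]_{\mathcal P}$ turns it into a reproducing kernel Pontryagin space of $\mathcal C$-valued analytic functions on $\tilde\Omega$ with the same index of negativity $\kappa$, whose reproducing kernel is
\[
\tilde K(w,w')=a(\sigma^{-1}(w))\,K_{\mathcal P}(\sigma^{-1}(w),\sigma^{-1}(w'))\,\overline{a(\sigma^{-1}(w'))}.
\]

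Next I translate the hypotheses. Reading the computation in the proof of Proposition \ref{P33} in reverse, I would verify that for $f\in\mathcal P$ and $F=\Phi f$,
\[
\Phi\bigl(R(a,b,\alpha)f\bigr)=\frac{1}{a(\alpha)}\,R_{\sigma(\alpha)}F,\qquad
\Phi\bigl(R(b,a,\alpha)f\bigr)=-\frac{1}{a(\alpha)}\bigl(F+\sigma(\alpha)R_{\sigma(\alpha)}F\bigr),
\]
where the first equality is \eqref{Rab1} and the second follows by combining \eqref{Rab1} with the identity \eqref{rarb1}. Consequently the $R(a,b,\alpha)$-invariance of $\mathcal P$ becomes $R_{\sigma(\alpha)}$-invariance of $\tilde{\mathcal P}$, and multiplying inequality \eqref{ineq1234567!!!} by $|a(\alpha)|^2$ yields exactly \eqref{radisk} at the point $\sigma(\alpha)\in\mathbb D$ for the space $\tilde{\mathcal P}$.

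Now I invoke the disk case. Pre-composing with the disk automorphism $z\mapsto(z-\sigma(\alpha))/(1-z\overline{\sigma(\alpha)})$ moves the base point to the origin and turns \eqref{radisk} into the original inequality \eqref{malpensa190715!!!!}. Theorem \ref{michelle0} then supplies a Pontryagin space $\mathcal C_1$ with $\mathrm{ind}_-(\mathcal C_1)=\mathrm{ind}_-(\mathcal C)$ and a function $M\in\mathcal S_\kappa(\mathcal C_1,\mathcal C)$ such that each element of $\tilde{\mathcal P}$ has a unique meromorphic extension to $\mathbb D$ and the reproducing kernel of $\tilde{\mathcal P}$ is $K_M(w,w')=(I_{\mathcal C}-M(w)M(w')^*)/(1-w\overline{w'})$.

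Finally, set $S(z):=M(\sigma(z))$; by Proposition \ref{P32} this is a generalized Schur function on $\Omega_+$, and the factorisation $1-\sigma(z)\overline{\sigma(w)}=(a(z)\overline{a(w)}-b(z)\overline{b(w)})/(a(z)\overline{a(w)})$ converts the identity $\tilde K=K_M$ into
\[
K_{\mathcal P}(z,w)=\frac{\tilde K(\sigma(z),\sigma(w))}{a(z)\overline{a(w)}}=\frac{I_{\mathcal C}-S(z)S(w)^*}{a(z)\overline{a(w)}-b(z)\overline{b(w)}},
\]
which is the required form \eqref{michelle12345678!!!!!!!!!!!!}; the meromorphic extension to $\Omega_+$ of any $f\in\mathcal P$ is obtained by pulling the corresponding extension of $F=\Phi f$ back through $\sigma$. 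The main delicate step is the translation of $R(a,b,\alpha)$ and $R(b,a,\alpha)$ through $\Phi$ carried out above; once this bookkeeping is in place, the rest is a routine consequence of Theorem \ref{michelle0}.
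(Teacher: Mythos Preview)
Your argument is correct and takes a genuinely different route from the paper's proof. The paper works directly in the $(a,b)$-setting: it rewrites \eqref{ineq1234567!!!} via \eqref{rarb1} as an operator inequality $T^*T+G^*G\le I_{\mathcal P}$ with
\[
T=\frac{(|a(\alpha)|^2-|b(\alpha)|^2)R(a,b,\alpha)-\overline{b(\alpha)}I_{\mathcal P}}{\overline{a(\alpha)}},\qquad G=\sqrt{|a(\alpha)|^2-|b(\alpha)|^2}\,C_\alpha,
\]
then applies the Bogn\'ar--Kr\'amli factorization to produce the defect operators $F,H$, defines $S$ by \eqref{monique}, and through a chain of algebraic identities (in particular \eqref{late123456}--\eqref{late123456789} and the point-evaluation formula $C_w=-C_\alpha\bigl(a(w)R(b,a,\alpha)+b(w)R(a,b,\alpha)\bigr)^{-1}$) verifies directly that the reproducing kernel of $\mathcal P$ equals $K_S$. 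Your proof, by contrast, pushes $\mathcal P$ forward through $\Phi$ to a space $\tilde{\mathcal P}$ on which the hypotheses become exactly those of the disk theorem, invokes Theorem~\ref{michelle0} as a black box to obtain $M$, and then pulls back via $S=M\circ\sigma$.

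What each approach buys: the paper's computation is self-contained and yields explicit realization data in the $(a,b)$-coordinates, which is useful if one wants the operators $T,F,G,H$ in that language; it also avoids any local-invertibility hypothesis on $\sigma$. Your reduction is shorter and more conceptual, and makes transparent that the unified theorem is really the disk theorem transported through $\sigma$; the price is the need to shrink $\Omega$ so that $\sigma$ is a biholomorphism (hence $\sigma'\not=0$ there) and then argue that the kernel identity, once established locally, propagates to all of $\Omega_+$ by analytic continuation. You might make that last extension step explicit, and note that the M\"obius shift to the origin is exactly the manoeuvre used in the proof of Proposition~\ref{P32}.
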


\begin{proof} We proceed in a number of steps.\\

STEP 1: {\sl \eqref{ineq1234567!!!} can be rewritten as
\begin{equation}
\label{newenq}
\begin{split}
\left(\frac{(|a(\alpha)|^2-|b(\alpha)|^2)R(a,b,\alpha)-\overline{b(\alpha)}I_{\mathcal P}}{\overline{a(\alpha)}}\right)^*\left(\frac{(|a(\alpha)|^2-|b(\alpha)|^2)R(a,b,\alpha)-\overline{b(\alpha)}I_{\mathcal P}}{\overline{a(\alpha)}}\right)+\\
&\hspace{-5.5cm}+(|a(\alpha)|^2-|b(\alpha)|^2)C_\alpha^*C_\alpha\le I_{\mathcal P}.
\end{split}
\end{equation}
}

Recall that $a(\alpha)\not=0$ since $\alpha\in\Omega_+$.
Taking into account \eqref{rarb1} we rewrite \eqref{ineq1234567!!!} as
\[
R(a,b,\alpha)^*R(a,b,\alpha)-\frac{1}{|a(\alpha)|^2}(I_{\mathcal P}+b(\alpha)R(a,b,\alpha))^*(I_{\mathcal P}+b(\alpha)R(a,b,\alpha))
+C_\alpha^* C_\alpha\le 0,
\]
that is, \eqref{newenq}.\\

We now proceed as in the proof of Theorem \ref{michelle1} with now
\[
T=\frac{(|a(\alpha)|^2-|b(\alpha)|^2)R(a,b,\alpha)-\overline{b(\alpha)}I_{\mathcal P}}{\overline{a(\alpha)}},\quad G=\sqrt{(|a(\alpha)|^2-|b(\alpha)|^2)}
C_\alpha,\,\,\text{and}\,\, C=\begin{pmatrix}T\\ G\end{pmatrix}.
\]
We define operators $F,H$ via \eqref{FH} and set $\sigma(z)=\dfrac{b(z)}{a(z)}$.\\

STEP 2: {\it It holds that
\begin{eqnarray}
\label{late123456}
(1-\sigma(z)\overline{\sigma(\alpha)})I_{\mathcal P}-(\sigma(z)-\sigma(\alpha))T&=&
\frac{\rho(\alpha,\alpha)}{|a(\alpha)|^2}\left(I_{\mathcal P}-\frac{\delta(z,\alpha)}{\alpha(z)}R(a,b,\alpha)\right)\\
a(\alpha)\left(R(b,a,\alpha)+\frac{b(z)}{a(z)}R(a,b,\alpha)\right)&=&-I_{\mathcal P}+\frac{\delta(z,\alpha)}{a(z)}R(a,b,\alpha).
\label{late123456789}
\end{eqnarray}}
Indeed, we have
\[
\begin{split}
(1-\sigma(z)\overline{\sigma(\alpha)})I_{\mathcal P}-(\sigma(z)-\sigma(\alpha))T&=\\
&\hspace{-5.5cm}=\left(1-\frac{b(z)\overline{b(\alpha)}}{a(z)
\overline{a(\alpha)}}\right)I_{\mathcal P}-\left(\frac{b(z)}{a(z)}-\frac{b(\alpha)}{a(\alpha)}\right)\left(
\frac{(|a(\alpha)|^2-|b(\alpha)|^2)R(a,b,\alpha)-\overline{b(\alpha)}I_{\mathcal P}}{\overline{a(\alpha)}}\right)\\
&\hspace{-5.5cm}=\frac{\rho(\alpha,\alpha)}{|a(\alpha)|^2}\left(I_{\mathcal P}-\frac{\delta(z,\alpha)}{a(z)}R(a,b,\alpha)\right),
\end{split}
\]
which proves \eqref{late123456}. On the other hand,
\[
\begin{split}
a(\alpha)\left(R(b,a,\alpha)+\frac{b(z)}{a(z)}R(a,b,\alpha)\right)&=a(\alpha)R(b,a,\alpha)+a(\alpha)\frac{b(z)}{a(z)}R(a,b,\alpha)\\
&=-I_{\mathcal P}-b(\alpha)R(a,b,\alpha)+a(\alpha)\frac{b(z)}{a(z)}R(a,b,\alpha)\\
&=-I_{\mathcal P}+\frac{\delta(z,\alpha)}{a(z)}R(a,b,\alpha),
\end{split}
\]
which is \eqref{late123456789}.\\

STEP 3: {\sl Let $S$ be given by \eqref{monique}. Then,
\begin{equation}
\label{Sab}
S(z)=H-\frac{|a(\alpha)|^2(b(z)a(\alpha)-a(z)b(\alpha))}{a(\alpha)^2\rho(\alpha,\alpha)}G(a(z)R(b,a,\alpha)+b(z)R(a,b,\alpha))^{-1}F,
\end{equation}
and
\begin{equation}
\label{kernab}
\begin{split}
\frac{I_{\mathcal C}-S(z)S(w)^*}{a(z)\overline{a(w)}-b(z)\overline{b(w)}}&=\\
&\hspace{-2.5cm}=
C_\alpha\left(a(z)R(b,a,\alpha)+b(z)R(a,b,\alpha)\right)^{-1}
\left(a(w)R(b,a,\alpha)+b(w)R(a,b,\alpha)\right)^{-*}C_\alpha^*.
\end{split}
\end{equation}
}
It follows from \eqref{late123456}-\eqref{late123456789} that
\[
(1-\sigma(z)\overline{\sigma(\alpha)})I_{\mathcal P}-(\sigma(z)-\sigma(\alpha))T=-\frac{\rho(\alpha,\alpha)a(\alpha)}{|a(\alpha)|^2a(z)}
\left(a(z)R(b,a,\alpha)+b(z)R(a,b,\alpha)\right).
\]
We now plug this expression in \eqref{monique}, taking into account that
\[
\sigma(z)-\sigma(\alpha)=\frac{b(z)a(\alpha)-a(z)b(\alpha)}{a(z)a(\alpha)}
\]
to get \eqref{Sab}.\smallskip

We now prove \eqref{kernab}. From the similar formula for $a(z)=1$ and $b(z)=z$ (see \cite{adrs}) we have, with
\[
b_\sigma(z)=\frac{\sigma(z)-\sigma(\alpha)}{1-\sigma(z)\overline{\sigma(\alpha)}},
\]
\[
\frac{I_{\mathcal C}-S(z)S(w)^*}{1-b_\sigma(z)\overline{b_\sigma(w)}}=G(I_{\mathcal P}-b_\sigma(z)T)^{-1}
(I_{\mathcal P}-b_\sigma(w)T)^{-*}G^*,
\]
that is,
\[
\begin{split}
(1-\sigma(z)\overline{\sigma(\alpha)})\frac{I_{\mathcal C}-S(z)S(w)^*}{(1-|\sigma(\alpha)|^2)(1-\sigma(z)\overline{\sigma(w)})}
(1-\sigma(\alpha)\overline{\sigma(w)})&=\\
&\hspace{-10cm}=
(1-\sigma(z)\overline{\sigma(\alpha)})G
\left((1-\sigma(z)\overline{\sigma(\alpha)})I_{\mathcal P}-(\sigma(z)-\sigma(\alpha))T\right)^{-1}\times\\
&\hspace{-9.5cm}\times
\left((1-\sigma(w)\overline{\sigma(\alpha)})I_{\mathcal P}-(\sigma(w)-\sigma(\alpha))T\right)^{-*}G^*(1-\sigma(\alpha)\overline{\sigma(w)}).
\end{split}
\]
Using the definition of $G$ and \eqref{late123456}-\eqref{late123456789} this last equality is equivalent to:
\[
\begin{split}
\frac{I_{\mathcal C}-S(z)S(w)^*}{(1-|\sigma(\alpha)|^2)(1-\sigma(z)\overline{\sigma(w)})}&=\\
&\hspace{-3cm}=
\frac{|a(\alpha)|^2a(z)}{a(\alpha)\rho(\alpha,\alpha)}
C_\alpha\left(a(z)R(b,a,\alpha)+b(z)R(a,b,\alpha)\right)^{-1}\times\\
&\hspace{-2.5cm}\times
\left(a(w)R(b,a,\alpha)+b(w)R(a,b,\alpha)\right)^{-*}C_\alpha^*\frac{|a(\alpha)|^2\overline{a(w)}}{\overline{a(\alpha)}\rho(\alpha,\alpha)}\rho(\alpha,\alpha),
\end{split}
\]
from which the result follows since
\[
\frac{a(z)\overline{a(w)}}{1-|\rho(\alpha)|^2}=\frac{|a(\alpha)|^2a(z)}{a(\alpha)\rho(\alpha,\alpha)}\frac{|a(\alpha)|^2\overline{a(w)}}{\overline{a(\alpha)}\rho(\alpha,\alpha)}\rho(\alpha,\alpha).
\]
STEP 4: {\sl With $S$ as in the previous step, the reproducing kernel of $\mathcal P$ is equal to
\[
\frac{I_{\mathcal C}-S(z)S(w)^*}{a(z)\overline{a(w)}-b(z)\overline{b(w)}}.
\]
}

We verify that
\begin{equation}
\label{cwcz}
C_w=-C_\alpha\left(a(w)R(b,a,\alpha)+b(w)R(a,b,\alpha)\right)^{-1}
\end{equation}
(note that \eqref{cwcz} holds for $w=\alpha$ in view of \eqref{rarb1}).
We write
\[
-\left(a(w)R(b,a,\alpha)+b(w)R(a,b,\alpha)\right)^{-1}h=g.
\]
Then
\[
\begin{split}
-h(z)&=\left(\left(a(w)R(b,a,\alpha)+b(w)R(a,b,\alpha)\right)g\right)(z)\\
&=\frac{a(w)b(z)-b(w)a(z)}{b(\alpha)a(z)-a(\alpha)b(z)}g(z)+\frac{a(w)b(\alpha)-b(w)a(\alpha)}{a(\alpha)b(z)-b(\alpha)a(z)}g(\alpha).
\end{split}
\]
Thus $-h(w)=g(\alpha)$ and \eqref{cwcz} is proved. Thus the reproducing kernel can be written as $C_zC_w^*$,  which ends the proof.
\end{proof}

\begin{remark}{\rm Since \eqref{adjfa} holds in $\mathbf H_2(\rho)$ we obtain that the orthogonal complement of the space $\mathcal P$ is of the form
$S\mathbf H_2(\rho)$.
}
\end{remark}
\section{The quaternionic-valued case}
\setcounter{equation}{0}

In this part we consider the case of quaternionic-valued slice hyperholomorphic functions. The results for quaternionic Pontryagin spaces corresponding to the results in Section \ref{sec21} can be found in \cite{acs_book,acs1,MR3192300,acs_OT_244}, and are not repeated, but we provide precise references. The counterpart of Theorem \ref{michelle0} and of Theorem \ref{michelle1} have been proved in \cite[Theorem 7.1, p. 862]{MR3192300} and in \cite[Theorem 7.2, p. 122]{acls_milan}, respectively. Here we mainly consider the counterpart of Theorem \ref{michelle1} for the half-space case. The Beurling-Lax theorem for slice hyperholomorphic functions on the open unit ball of the quaternions is discussed in Section 8.4 of \cite{acs_book}.
\\
To keep the exposition self-contained, in this section we also provide the necessary background on slice hyperholomorphic functions.
We begin by providing some basic fact about slice hyperholomorphic functions with values in a Banach or Hilbert space. For more information, we refer the reader to \cite{acs_book}.

\setcounter{equation}{0}

\subsection{Slice hyperholomorphic functions}

 We will denote by $\mathbb H$ the skew field of quaternions. It contains elements of the form $p=x_0+x_1i+x_2j+x_3k$ where $x_\ell\in\mathbb R$, and $i,j,k$ are imaginary units such that $i^2=j^2=-1$, $ij=-ji$ and $k=ij$. The conjugate of $p$ is denoted by $\bar p$ and $\bar p=x_0-x_1i-x_2j-x_3k$. Note that $p\bar p=\bar p p=|p|^2=x_0^2+x_1^2+x_2^2+x_3^2$.
 \\
The set $\mathbb S$ of quaternions $p$ such that $p^2=-1$ consists of purely imaginary quaternions, namely quaternions of the form $p=x_1i+x_2j+x_3k$, with $|p|=1$. It is a $2$-dimensional sphere in $\mathbb H$ identified with the Euclidean spaces $\mathbb R^4$.\\
Let $I\in\mathbb S$; then the set of elements of the form $x+Iy$ is a complex plane denoted by $\mathbb C_I$. Every nonreal quaternion $p$ belongs to a unique complex plane $\mathbb C_I$ where $I$ is determined by its imaginary part, normalized.\\
By $\mathbb H_+$ we denote the open half-space
\[
\mathbb H_+=\left\{p\in\mathbb H\,\,;{\rm Re}\, p>0\right\},
\]
which intersects the positive real axis.

The counterpart of Schur functions in the slice hyperholomorphic setting were introduced in \cite{acls_milan} and further studied in \cite{acs_book} to which we refer the reader for more details. Here we give the following definition of  slice hyperholomorphic functions (equivalent to the one given in \cite{acls_milan}):
\begin{definition}
Given
  be a two sided quaternionic Banach (or Hilbert) space $\mathcal X$,
a real differentiable function $f: \Omega\subseteq\mathbb H\to \mathcal{X}$ is (weakly)
slice hyperholomorphic if and only if $\frac 12(\partial_x
+I\partial_y)f_I(x+Iy)=0$ for all $I\in\mathbb{S}$.
\end{definition}
\begin{remark}{\rm
If, under the same hypothesis, one imposes $\frac 12\partial_x f_I(x+Iy)+ \frac 12
\partial_yf_I(x+Iy)I=0$ for all $I\in\mathbb{S}$ the function $f$ is said to be right (weakly)
slice hyperholomorphic.\\
In particular, when a function $f$ defined on $\Omega$ is quaternionic valued, we say that it is slice hyperholomorphic if
and only if $\frac 12(\partial_x
+I\partial_y)f_I(x+Iy)=0$ for all $I\in\mathbb{S}$.
}
\end{remark}

\begin{remark}{\rm
Given  a two-sided
quaternionic Hilbert space $\mathcal X$ and a $\mathcal X$-valued function $f$ slice
hyperholomorphic in a neighborhood of $\alpha\in\mathbb R$, then $f$ can be written as
a convergent power series
\[
f(p)=\sum_{n=0}^\infty (p-\alpha)^nf_n,
\]
where the coefficients $f_n\in\mathcal X$.}
\end{remark}

In the sequel, we will consider open sets $\Omega$ which are axially symmetric slice domains (in short, s-domains). \begin{definition} Let $\Omega\subseteq \mathbb H$. We say that $\Omega$ is
axially symmetric if whenever $p=x_0+Iy_0$ belongs to $\Omega$ also all the elements of the form $x_0+Jy_0$, $J\in\mathbb S$ belongs to $\Omega$. \\
$\Omega$ is said to be a slice domain if it is a connected open set whose intersection with any complex plane $\mathbb C_I$ is connected.
\end{definition}
Given $p=x_0+Iy_0$ the set of elements of the form $x_0+Jy_0$, $J\in\mathbb S$ is a $2$-dimensional sphere denoted by $[p]$. The sphere $[p]$ contains elements of the form $q^{-1} p q$ for $q\not=0$.
\begin{remark}{\rm
The Identity Principle, see \cite{acls_milan,acs_book}, implies that two slice hyperholomorphic functions defined on an s-domain and $\mathcal X$-valued coincide if their restrictions to the real axis coincide. Moreover, any real analytic function $f: [a,b]\subseteq\mathbb R \to \mathcal X$ can be extended to a function, denoted by ${\rm ext}(f)$, which is slice hyperholomorphic on a suitable axially symmetric s-domain $\Omega$ containing $[a,b]$. In fact, for any $x_0\in [a,b]$ the function $f$ can be written as $f(x)=\sum_{n\geq 0} x^n f_n$, $f_n\in\mathcal X$,  for $x$ such that $|x-x_0|<\varepsilon$ and thus $({\rm ext}f)(p)= \sum_{n\geq 0} p^n f_n$ converges and defines a slice hyperholomorphic function for $|p-x_0|< \varepsilon_{x_0}$. Thus we can set $B(x_0,\varepsilon_{x_0})=\{p\in\mathbb H\ :\ |p-x_0|< \varepsilon_{x_0}\}$ and  $\Omega=\cup_{x_0\in [a,b]} B(x_0,\varepsilon_{x_0})$.}
\label{extension!!!!}
\end{remark}
The pointwise multiplication of two slice hyperholomorphic functions is not, in general hyperholomorphic, so we introduce the following notion of multiplication:
\begin{definition}
Let $\Omega\subseteq\mathbb H$ be an axially symmetric s-domain and let
$f,g:  \Omega\to \mathcal{X}$ be slice hyperholomorphic functions with values in a two sided quaternionic Banach algebra $\mathcal X$. Let
$f(x+Iy)=\alpha(x,y)+I\beta(x,y)$,
$g(x+Iy)=\gamma(x,y)+I\delta(x,y)$. Then we define
\begin{equation}\label{starleft}
(f\star g)(x+Iy):= (\alpha\gamma -\beta \delta)(x,y)+
I(\alpha\delta +\beta \gamma )(x,y).
\end{equation}
\end{definition}
It can be verified that $f\star g$ is slice
hyperholomorphic. In a similar manner, one can define a multiplication, denoted by $\star_r$, between right slice hyperholomorphic functions.
\begin{remark}{\rm In particular, let $f:\,\rho_S(A)\cap \mathbb R\to \mathcal X$ be the function $f(x)=(I-xA)^{-1}$, where $\rho_S(A)$ denotes the S-resolvent of $A$.
Then
\[
p^{-1}S_R^{-1}(p^{-1},A)=(I-\bar p A)(I-2 {\rm Re}(p) A
+|p|^2 A^2)^{-1}
\]
is the unique slice hyperholomorphic extension to $\rho_S(A)$. This extension will denoted by $(I-pA)^{-\star}$, in fact it is the $\star$-inverse of $(I-pA)$.}
\end{remark}

In the sequel, we will make use of the following result, see
\cite{acls_milan,acs_book}:
\begin{proposition}
Let $A$ be a bounded linear operator from a right-sided
quaternionic Banach $\mathcal X$ space into itself, and let $G$
be a bounded linear operator from $\mathcal X$ into $\mathcal Y$,
where $\mathcal Y$ is a  two sided quaternionic Banach space. The
slice hyperholomorphic extension of $G(I-xA)^{-1}$,
$1/x\in\sigma_S(A)\cap\mathbb{R}$, is
\[
(G-\overline{p}GA)(I-2{\rm Re}(p)\, A +|p|^2A^2)^{-1}.
\]
\label{formula060813}
\end{proposition}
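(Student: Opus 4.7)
The plan is to reduce this proposition to the uniqueness part of the Identity Principle for slice hyperholomorphic functions, as invoked in Remark \ref{extension!!!!}. That is, I will show that the candidate expression $\Phi(p):=(G-\overline{p}GA)(I-2\mathrm{Re}(p)A+|p|^{2}A^{2})^{-1}$ is itself slice hyperholomorphic on an axially symmetric s-domain containing the relevant real interval, and that it agrees with $x\mapsto G(I-xA)^{-1}$ on that interval. By uniqueness of the slice hyperholomorphic extension, $\Phi$ must then be the extension we are looking for.

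First I would verify agreement on the real axis. When $p=x\in\mathbb{R}$, we have $\overline{p}=x$, $2\mathrm{Re}(p)=2x$, $|p|^{2}=x^{2}$, so
\[
I-2xA+x^{2}A^{2}=(I-xA)^{2},\qquad G-xGA=G(I-xA),
\]
and hence $\Phi(x)=G(I-xA)(I-xA)^{-2}=G(I-xA)^{-1}$, as required.

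Next I would address slice hyperholomorphicity of $\Phi$ on the appropriate subset of $\rho_{S}(A)$. The remark preceding the proposition already tells us that $(I-\overline{p}A)(I-2\mathrm{Re}(p)A+|p|^{2}A^{2})^{-1}$ is the slice hyperholomorphic extension of $x\mapsto(I-xA)^{-1}$ (namely, up to a scalar factor, the right $S$-resolvent operator $S_{R}^{-1}(p^{-1},A)$). Since $G$ is a bounded right-linear operator from $\mathcal{X}$ into $\mathcal{Y}$, composing on the left with $G$ preserves the real differentiability and the Cauchy–Riemann equation that define slice hyperholomorphicity on each slice $\mathbb{C}_{I}$. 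The identity $G(I-\overline{p}A)=G-\overline{p}GA$ (valid because the scalar $\overline{p}$ commutes with $G$ in the two-sided setting) then shows that $G\cdot(I-\overline{p}A)(I-2\mathrm{Re}(p)A+|p|^{2}A^{2})^{-1}$ coincides with $\Phi(p)$.

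The main conceptual step is therefore just the identification with the known $S$-resolvent extension; no new computation is required beyond the algebraic simplification above and invoking uniqueness. The only minor point to be careful about is the left/right action of the quaternionic scalar $\overline{p}$ on the operator $GA$ in the two-sided Banach space $\mathcal{Y}$, but this is routine and does not affect the structure of the argument.
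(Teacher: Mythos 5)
Your argument is sound, but note that the paper itself offers no proof of Proposition \ref{formula060813}: it is quoted verbatim from \cite{acls_milan,acs_book}, so there is nothing internal to compare against. Your route (check agreement with $G(I-xA)^{-1}$ on the real points, check that the candidate is slice hyperholomorphic, invoke uniqueness of the slice hyperholomorphic extension as in Remark \ref{extension!!!!}) is exactly the standard argument from those references, and your real-axis computation $I-2xA+x^2A^2=(I-xA)^2$, $G-xGA=G(I-xA)$ is correct. The one place where your write-up is looser than it should be is the factorization $\Phi(p)=G\cdot(I-\overline{p}A)(I-2\mathrm{Re}(p)A+|p|^2A^2)^{-1}$: since $\mathcal X$ is only a \emph{right}-sided quaternionic Banach space, the operator $\overline{p}A$ (left multiplication of $Av$ by $\overline{p}$ inside $\mathcal X$) is not actually defined, which is precisely why the statement is phrased as $G-\overline{p}GA$ with the scalar acting on the image in the two-sided space $\mathcal Y$. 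You flag this as ``routine,'' and it is, but the clean fix is not to commute $\overline{p}$ past $G$; it is to verify the Cauchy--Riemann equation $\tfrac12(\partial_x+I\partial_y)\Phi_I=0$ directly on $\Phi(x+Iy)=\bigl(G-xGA+IyGA\bigr)\bigl(I-2xA+(x^2+y^2)A^2\bigr)^{-1}$, which is a short computation using that $A$ commutes with $\bigl(I-2xA+|p|^2A^2\bigr)^{-1}$ and that $p\overline{p}=|p|^2$. With that adjustment the proof is complete and matches what the cited sources do.
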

With an abuse of
notation, we will write  $G\star (I-pA)^{-\star}$ meaning the
expression $(G-\overline{p}GA)(I-2{\rm Re}(p)\, A
+|p|^2A^2)^{-1}$.

\begin{remark}{\rm
The composition $f\circ g$ of two slice hyperholomorphic functions is not, in general, slice hyperholomorphic unless additional hypothesis are assumed. We say that a function slice hyperholomorphic on $\Omega$ is {\em quaternionic intrinsic}
if it is quaternionic valued and, for every $I\in\mathbb S$, it takes elements belonging to $\Omega\cap\mathbb C_I$ to $\mathbb C_I$. The composition of two slice hyperholomorphic functions $f\circ g$, when defined, is slice hyperholomorphic when $g$ is quaternionic intrinsic.\\
In particular, the composition with the quaternionic counterpart of the operator $R_\alpha$ will not be hyperholomorphic, unless $\alpha\in\mathbb R$. Note also that if $f$ is quaternionic intrinsic and $g$ is slice hyperholomorphic, then $f\star g=fg$ and $f^{-\star}=f^{-1}$.}
\label{rk:intrinsic}
\end{remark}
 In this setting, $R_\alpha$ is defined as
\begin{equation}
R_{\alpha}f(p)=(p-\alpha)^{-1}(f(p)-f(\alpha))\stackrel{\rm def.}{=}
\begin{cases}\,\,\sum_{n=1}^\infty (p-\alpha)^{n-1}f_n,\quad
p\not=\alpha,\\
\quad f_1,\hspace{2.8cm}\,\,\, p=\alpha,
\end{cases}
\label{RX0}
\end{equation}
where $f(p)=\sum_{n=0}^\infty (p-\alpha)^{n}f_n$.
\\
We end this part by recalling the notion of slice hypermeromorphic functions:
\begin{definition}
Let $\mathcal X$ be a two-sided quaternionic Banach space.
We say that a function $f:\, \Omega\to \mathcal X$ is (weakly) slice
hypermeromorphic if for any $\Lambda$ in the dual of $\mathcal X$, the function $\Lambda f: \, \Omega\to
\mathbb H$ is slice hypermeromorphic in $\Omega$.
\end{definition}
Note that
the previous definition means that $\Lambda f$ is slice hyperholomorphic in an open set $\Omega'$, where the points belonging to $\Omega\setminus \Omega'$ are the poles of $\Lambda f$ and $(\Omega\setminus \Omega')\cap\mathbb C_I$ has no point limit in $\Omega\cap\mathbb C_I$ for $I\in\mathbb S$.

\subsection{The Hardy space of the open half-space $\mathbb H_+$}
In this subsection we recall the definition of the Hardy space of the half space $\mathbb H_+$.
\begin{definition}
We define $\mathbf{H}_2(\mathbb H_+)$ as the space of slice
hyperholomorphic functions on $\mathbb H_+$ such that
\begin{equation}\label{xge0integ}
\sup_{I\in\mathbb S}\int_{-\infty}^{+\infty} | f(Iy) |^2 dy <\infty .
\end{equation}
\end{definition}
Let us consider the function
\begin{equation}\label{kernel}
k(p,q)=(\bar p +\bar q)(|p|^2 +2{\rm Re}(p) \bar q +\bar q^2)^{-1} =(|q|^2 +2{\rm Re}(q) p + p^2)^{-1}( p + q)
\end{equation}
which is slice hyperholomorphic in $p$ and $\bar q$ on the left and on
the right, respectively in its domain of definition. Note that we can write
\[
k(p,q)=(p+ \bar q)^{-\star}
\]
where the $\star$-inverse is computed with respect to $p$.\\
We have:
\begin{proposition}
The kernel $\frac{1}{2\pi}k(p,q)$ is reproducing, i.e. for any
$f\in\mathbf{H}_2(\mathbb H_+)$
\[
f(p)=\int_{-\infty}^\infty \frac{1}{2\pi}k(p,Iy) f(Iy) dy.
\]
\end{proposition}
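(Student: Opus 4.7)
The plan is to reduce the statement to the classical Cauchy reproducing formula for the Hardy space of the complex right half-plane by restricting to a slice $\mathbb{C}_I$, and then to extend the identity to all of $\mathbb{H}_+$ using slice hyperholomorphy and the Identity Principle.

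The first ingredient is the algebraic simplification of the kernel on a slice. A direct quaternion computation yields
\[
(p+\bar q)(\bar p+\bar q)=|p|^{2}+2\mathrm{Re}(p)\bar q+\bar q^{2},
\]
so when $p,q\in\mathbb{C}_I$ (where all the factors commute) one obtains
\[
k(p,q)=(\bar p+\bar q)\bigl[(p+\bar q)(\bar p+\bar q)\bigr]^{-1}=(p+\bar q)^{-1}.
\]
For $q=Iy$ and $p\in\mathbb{C}_I\cap\mathbb{H}_+$ this gives $\tfrac{1}{2\pi}k(p,Iy)=\tfrac{1}{2\pi(p-Iy)}$, which is precisely the classical Cauchy reproducing kernel of the Hardy space of the right half-plane in $\mathbb{C}_I$.

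Next, I would fix $I\in\mathbb{S}$ and restrict $f$ to $\mathbb{C}_I\cap\mathbb{H}_+$. Viewed against any auxiliary imaginary unit $J\in\mathbb{S}$ anticommuting with $I$, the restriction $f_I$ is classically holomorphic as a function with values in the two-dimensional $\mathbb{C}_I$-vector space $\mathbb{H}=\mathbb{C}_I\oplus \mathbb{C}_I J$, and the norm bound \eqref{xge0integ} applied at this specific $I$ places $f_I$ in the corresponding vector-valued Hardy space of the right half-plane. Applying the classical reproducing formula componentwise (as in the discussion surrounding Lemma \ref{Lemma27}) and substituting the identity from the preceding paragraph gives
\[
f(p)=\int_{-\infty}^{\infty}\tfrac{1}{2\pi}k(p,Iy)f(Iy)\,dy,\qquad p\in\mathbb{C}_I\cap\mathbb{H}_+.
\]

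To conclude, one must upgrade this identity from $\mathbb{C}_I\cap\mathbb{H}_+$ to all of $\mathbb{H}_+$. Both sides are slice hyperholomorphic in $p$: the left by hypothesis, and the right because $p\mapsto k(p,Iy)$ is slice hyperholomorphic on $\mathbb{H}_+$ for each $y$, with the slice Cauchy--Riemann operator commuting with the $y$-integral by a dominated convergence argument based on Cauchy--Schwarz and the Hardy bound. Since they agree on the slice $\mathbb{C}_I\cap\mathbb{H}_+$ of the axially symmetric s-domain $\mathbb{H}_+$, the Identity Principle for slice hyperholomorphic functions forces global equality. The main obstacle I anticipate is precisely this last interchange of operators with the integral; it is routine but requires uniform decay estimates on $k(p,Iy)$ for $p$ in a compact subset of $\mathbb{H}_+$ and $y\to\pm\infty$, combined with $f(I\cdot)\in L^{2}(\mathbb{R})$.
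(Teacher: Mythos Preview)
The paper does not supply its own proof of this proposition; it is stated without proof as a recalled result from the references (the Hardy space of $\mathbb H_+$ is treated in \cite{acls_milan} and \cite{acs_book}). There is therefore nothing to compare against, but your proposed argument---reduce to the classical Cauchy reproducing formula on a fixed slice $\mathbb C_I$, then extend to all of $\mathbb H_+$ via slice hyperholomorphy and the Identity Principle---is the natural route and is essentially correct.

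One imprecision worth fixing: the identity $(p+\bar q)(\bar p+\bar q)=|p|^{2}+2\,\mathrm{Re}(p)\,\bar q+\bar q^{2}$ does \emph{not} hold for arbitrary quaternions. Expanding the left side gives $|p|^2+p\bar q+\bar q\,\bar p+\bar q^2$, and $p\bar q+\bar q\,\bar p=2\,\mathrm{Re}(p)\,\bar q$ only when $p$ and $\bar q$ commute, i.e.\ when $p$ and $q$ lie in a common complex plane $\mathbb C_I$. Since you only use the conclusion $k(p,q)=(p+\bar q)^{-1}$ on the slice, where the identity is valid, the logic survives; but you should state the factorization only for $p,q\in\mathbb C_I$ rather than as a ``direct quaternion computation'' in general. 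The remaining steps (splitting $f_I$ into $\mathbb C_I$-valued components, invoking the classical Hardy-space Cauchy formula, and justifying the interchange of the slice Cauchy--Riemann operator with the $y$-integral via dominated convergence) are standard and pose no real obstacle.
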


The $\mathbf
L(\mathcal D,\mathcal C)$-valued function $S$ slice
hypermeromorphic in an axially symmetric s-domain $\Omega$ which
intersects the positive real line belongs to the class $\mathcal
S_\kappa(\Omega)$ if the kernel
\[
K_S(p,q)=k(p,q) I_{\mathcal C}-S(p)\star k(p,q)I_{\mathcal C}\star_rS(q)^*
\]
has $\kappa$ negative squares in $\Omega$, where $k(p,q)$ is defined in (\ref{kernel}).

\subsection{Generalized Schur functions}
In this section we discuss the quaternionic counterpart of Theorem \ref{michelle1}, see \cite{acls_milan}.  Let $\mathcal C$ and $\mathcal D$ be a pair of two-sided quaternionic Pontryagin spaces with the same index of negativity.
The $\mathbf L(\mathcal D,\mathcal C)$-valued function $S$ analytic in some open, axially symmetric, subset $\Omega$ of the open right half-space
$\mathbb H_+$ is called a generalized Schur function if the $\mathbf L(\mathcal  C,\mathcal C)$-valued kernel $K_S(p,q)$ solution of the
equation
\begin{equation}
\label{michelle12345678910}
2\pi(pK_S(p,q)+K_S(p,q)^*\bar q)=I_{\mathcal C}-S(p)S(q)^*,\quad p,q\in\Omega
\end{equation}
has a finite number of negative squares, say $\kappa$, in $\Omega$. Let $\alpha\in\Omega\cap[0,\infty)$. A function $S$ is a generalized Schur function of the right half-plane if and only if it can be written in the form
\begin{equation}
\label{250715toParis!!!!}
S(p)=H-\frac{p-\alpha}{p+{\alpha}}G\star\left(I_{\mathcal P}-\frac{p-\alpha}{p+{\alpha}}T\right)^{-\star}F.
\end{equation}
where $\mathcal P$ is a right-sided quaternionic Pontryagin space with index of negativity $\kappa$ and where the operator-matrix
\begin{equation}
\label{s-adrs-123}
\begin{pmatrix}T&F\\ G&H\end{pmatrix}\,\,:\,\, \mathcal P\oplus \mathcal D\,\,\longrightarrow\,\,\mathcal P\oplus \mathcal C
\end{equation}
is coisometric. It follows that $S$ has a unique slice hyperholomorphic extension to $\mathbb H_+$, and for $S$ so extended the kernel $K_S$ has still
$\kappa$ negative squares for $p,q$ in the domain of slice hyperholomorphicity of $S$. It follows that the space $\mathcal P(S)$ is $R_\alpha$-invariant.\smallskip

We note that equation \eqref{250715toParis!!!!} gives the (unique) slice hypermeromorphic extension of
\[
S(x)=H-\frac{x-\alpha}{x+{\alpha}}G\left(I_{\mathcal P}-\frac{x-\alpha}{x+{\alpha}}T\right)^{-1}F
\]
from a real neighborhood $(\alpha-\eta,\alpha+\eta)$ to the open right half-space.
\\
We also note that the quaternionic analog of Theorem \ref{michelle0} proved in \cite[Theorem 7.1, p. 862]{MR3192300} assumes the inequality
\begin{equation}
[R_0 f,R_0f]_{\mathcal P}\le[f,f]_{\mathcal P}-[f(0),f(0)]_{\mathcal C},\quad \forall f\in\mathcal P.
\label{malpensa190715!!!!q}
\end{equation}
It is immediate that $R_0(I+\alpha R_{\alpha})=R_{\alpha}$, $\alpha\in\mathbb B\cap\mathbb R$, thus
\eqref{malpensa190715!!!!q} can be set at another real point $\alpha\in\mathbb B$:
\begin{equation}
[R_{\alpha} f,R_{\alpha} f]_{\mathcal P}\le[(I_{\mathcal P}+\alpha R_{\alpha})f,(I_{\mathcal P}+\alpha R_{\alpha})f]_{\mathcal P}-[f(\alpha),f(\alpha)]_{\mathcal C},\quad \forall f\in\mathcal P.
\label{malpensa!!!!q}
\end{equation}

\subsection{The structure theorem}
We begin by proving that Lemma \ref{Lemma27} can be generalized to this setting in fact we have:
\begin{lemma}\label{Lemma27q}
Let $\alpha, \beta\in\mathbb R^+$. Then
equality \eqref{equadb2} holds in $\mathbf H_2(\mathbb H_+)$.
\end{lemma}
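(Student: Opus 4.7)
The plan is to mirror the proof of Lemma \ref{Lemma27} closely, exploiting that $\alpha,\beta\in\mathbb R^+$ are real so that $(p-\alpha)$ and $(p-\beta)$ are quaternionic intrinsic polynomials and therefore the operators $R_\alpha,R_\beta$ preserve slice hyperholomorphicity on $\mathbb H_+$ (see Remark \ref{rk:intrinsic}). The architecture of the argument is: verify the identity on a dense set of reproducing kernels, then extend by continuity using boundedness of $R_\alpha,R_\beta$.

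The first step is to establish the quaternionic analogue of the pointwise formula $R_\alpha k(\cdot,\mu)=-(\alpha+\overline{\mu})^{-1}k(\cdot,\mu)$ from the complex case. Using the representation
\[
k(p,\mu)=\big(|\mu|^2+2\mathrm{Re}(\mu)\,p+p^2\big)^{-1}(p+\mu),
\]
whose denominator $P(p):=|\mu|^2+2\mathrm{Re}(\mu)\,p+p^2$ has real coefficients and therefore commutes with $(p-\alpha)$, a direct computation factoring $(p-\alpha)$ out of $k(p,\mu)-k(\alpha,\mu)$ on the left, and using $P(\alpha)=(\alpha+\overline{\mu})(\alpha+\mu)$, yields
\[
R_\alpha k(p,\mu)=-k(p,\mu)(\alpha+\overline{\mu})^{-1}.
\]
Crucially, the scalar sits on the right of $k(\cdot,\mu)$, a necessary adjustment to the non-commutative setting.

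With this formula in hand, the second step is to compute each of the four terms in \eqref{equadb2} with $f=k(\cdot,\mu)$ and $g=k(\cdot,\nu)$ using the reproducing property (up to the conjugation convention of the quaternionic inner product on $\mathbf H_2(\mathbb H_+)$) and to verify that the sum vanishes. Once the order of the scalar factors attached to each kernel is tracked consistently, the resulting quaternionic identity factors out $k(\nu,\mu)$ and reduces to the same elementary cancellation that appears at the end of the proof of Lemma \ref{Lemma27}, using $\overline{\alpha}=\alpha$ and $\overline{\beta}=\beta$.

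The final step is density and continuity. Finite linear combinations of reproducing kernels are dense in $\mathbf H_2(\mathbb H_+)$, so \eqref{equadb2} will extend to all $f,g\in\mathbf H_2(\mathbb H_+)$ provided $R_\alpha$ and $R_\beta$ are bounded there. Boundedness is obtained exactly as in Lemma \ref{Lemma27}: specializing the already-proved identity to $f=g$ and $\alpha=\beta$ on the dense subset gives an inequality of the form $(2\alpha)\|R_\alpha f\|^2\le 2\|R_\alpha f\|\,\|f\|+2\pi\|C_\alpha\|^2\|f\|^2$, which yields a uniform bound on $\|R_\alpha f\|/\|f\|$ and hence a bounded extension. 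The main obstacle is the bookkeeping in step two: the right-handed formula for $R_\alpha k(\cdot,\mu)$ together with the sesquilinear structure on $\mathbf H_2(\mathbb H_+)$ forces careful tracking of left-versus-right placement of quaternionic scalars; once a convention is fixed, the reality of $\alpha$ and $\beta$ removes the only potentially awkward conjugations and the computation parallels the complex one verbatim.
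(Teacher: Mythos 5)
Your proposal is correct and follows essentially the same route as the paper: you derive $R_\alpha k(\cdot,\mu)=-k(\cdot,\mu)(\alpha+\overline{\mu})^{-1}$ with the scalar on the right, verify \eqref{equadb2} on the span of kernels (where the paper's final cancellation is the kernel identity $\nu k(\nu,\mu)+k(\nu,\mu)\overline{\mu}=1$, i.e.\ that $k(p,q)=(p+\overline{q})^{-\star}$, quoted from Proposition 4.7 of the realization paper), and conclude by density together with the boundedness of $R_\alpha,R_\beta$ exactly as in Lemma \ref{Lemma27}.
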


\begin{proof}
Also in the quaternionic setting, we use the fact that $\frac{1}{2\pi}k(p,q)=\frac{1}{2\pi} (p+\overline{q})^{-\star}$, where $k(p,q)$ is as in \eqref{kernel}, is the reproducing kernel of $\mathbf H_2(\mathbb H_+)$ and we prove the equality for $k(\cdot,\mu)$, $k(\cdot,\nu)$. Let us consider $\mu\in\mathbb H_+$ and $\alpha\in\mathbb R^+$. Then
\[
k(\alpha, \mu)= (|\mu|^2 +{\rm Re}(\mu)\alpha +\alpha^2)^{-1}(\alpha +\mu)=(\alpha +\mu)^{-1},
\]
moreover
\[
\begin{split}
R_\alpha k(p,&\mu)  = (p-\alpha)^{-1}(k(p,\mu)-k(\alpha,\mu) )\\
& = (p-\alpha)^{-1}( (|\mu|^2 +{\rm Re}(\mu)p + p^2)^{-1}(p +\mu)- ( \alpha- \bar \mu)^{-1} )\\
& = (p-\alpha)^{-1}(|\mu|^2 +{\rm Re}(\mu)p + p^2)^{-1}( (p +\mu)( \alpha- \bar \mu)-(|\mu|^2 +{\rm Re}(\mu)p + p^2)  ) ( \alpha- \bar \mu)^{-1}\\
& = (|\mu|^2 +{\rm Re}(\mu)p + p^2)^{-1} (p-\alpha)^{-1}(-p (p - \alpha) - (p - \alpha) \bar \mu) ( \alpha- \bar \mu)^{-1}\\
& = (|\mu|^2 +{\rm Re}(\mu)p + p^2)^{-1} (-p  - \bar \mu) ( \alpha- \bar \mu)^{-1}\\
& = -k(p,\mu)( \alpha- \bar \mu)^{-1}.\\
\end{split}
\]
It follows that
\[
\begin{split}
\langle R_\alpha \frac{1}{2\pi} k(\cdot,\mu),  \frac{1}{2\pi} k(\cdot,\nu)\rangle&=
\langle - \frac{1}{2\pi} k(\cdot,\mu)(\alpha+\overline{\mu})^{-1}, \frac{1}{2\pi} k(\cdot,\nu)\rangle\\
&= -\frac{1}{2\pi} k(\nu,\mu)(\alpha+\overline{\mu})^{-1},
\end{split}
\]
\[
\begin{split}
 \frac{1}{2\pi} k(\cdot,\mu), \langle R_\beta \frac{1}{2\pi} k(\cdot,\nu)\rangle&=
\langle  \frac{1}{2\pi} k(\cdot,\mu), -\frac{1}{2\pi} k(\cdot,\nu)(\beta+\overline{\nu})^{-1}\rangle\\
&=-({\beta}+\nu)^{-1} \frac{1}{2\pi} k(\nu,\mu),
\end{split}
\]
\[
\begin{split}
\langle R_\alpha \frac{1}{2\pi} k(\cdot,\mu), \langle R_\beta \frac{1}{2\pi} k(\cdot,\nu)\rangle&=
\langle - \frac{1}{2\pi} k(\cdot,\mu)(\alpha+\overline{\mu})^{-1}, -\frac{1}{2\pi} k(\cdot,\nu)(\beta+\overline{\nu})^{-1}\rangle\\
&=({\beta}+\nu)^{-1} \frac{1}{2\pi} k(\nu,\mu)(\alpha+\overline{\mu})^{-1},
\end{split}
\]
furthermore
\[
2\pi \overline{k(\beta,\nu)} k(\alpha, \mu)=\frac{1}{2\pi}\overline{(\beta+\overline{\nu})^{-1}}(\alpha+\overline{\mu})^{-1}= ({\beta}+\nu)^{-1}(\alpha+\overline{\mu})^{-1}.
\]
Let us now compute the left-hand side of \eqref{equadb2}, neglecting everywhere the factor $1/2\pi$ (i.e. multiplying \eqref{equadb2} by $2\pi$):
\[
\begin{split}
&2\pi(\langle R_\alpha f,g\rangle+\langle f,R_\beta g\rangle+(\alpha+\overline{\beta})
\langle R_\alpha f,R_\beta g\rangle+ 2\pi \overline{g(\beta)}f(\alpha))=\\
&=-k(\nu,\mu)(\alpha+\overline{\mu})^{-1} -({\beta}+\nu)^{-1} k(\nu,\mu)+(\alpha+\beta)({\beta}+\nu)^{-1}  k(\nu,\mu)(\alpha+\overline{\mu})^{-1}\\
&\hspace{7.5cm}+({\beta}+\nu)^{-1}(\alpha+\overline{\mu})^{-1}=\\
& =({\beta}+\nu)^{-1}[-({\beta}+\nu) k(\nu,\mu)-k(\nu,\mu)(\alpha+\overline{\mu})+ (\alpha+\beta) k(\nu,\mu)+1   ] (\alpha+\overline{\mu})^{-1}=\\
& =({\beta}+\nu)^{-1}[-\nu k(\nu,\mu)-k(\nu,\mu)\overline{\mu}+1   ] (\alpha+\overline{\mu})^{-1}.
\end{split}
\]
Proposition 4.7 in \cite{acls_milan} yields $-\nu k(\nu,\mu)-k(\nu,\mu)\overline{\mu}+1 =0$ and thus the equality
\eqref{equadb2} holds. The fact that \eqref{equadb2} holds in $\mathbf H_2(\mathbb H_+)$ follows from the fact that the linear span of the reproducing kernels form a dense subset of $\mathbf H_2(\mathbb H_+)$ and from the fact that $R_\alpha$, $R_\beta$ are bounded operators (see the proof of Lemma \ref{Lemma27}).
\end{proof}
\begin{theorem}
\label{michelle112345}
Let $\Omega\subset \mathbb H_+$ be a s-domain, and let $\alpha\in\mathbb R\cap\Omega$ and let $\mathcal C$ be a
two-sided quaternionic Pontryagin space.
Let $\mathcal P$ be a right-sided reproducing kernel Pontryagin space of $\mathcal C$-valued functions slice hyperholomorphic in $\Omega$, which is $R_\alpha$-invariant and such that inequality \eqref{malpensa190715} holds in $\mathcal P$.
Then the functions of $\mathcal P$ have a slice hypermeromorphic extension to $\mathbb H_+$ and there exists a
quaternionic two-sided Pontryagin space $\mathcal C_1$ with ${\rm ind}_-(\mathcal C_1)={\rm ind}_-(\mathcal C)$ and a function $S\in\mathcal S_\kappa(\mathcal C_1,\mathcal C)$, with $\kappa={\rm ind}_-(\mathcal P)$, such that the reproducing kernel of $\mathcal P$ is given by
\begin{equation}
\label{sarah123456789012}
2\pi \left(pK_S(p,q)+K_S(p,q)\overline{q}\right)=I_{\mathcal C}-S(p)S(q)^*
\end{equation}
\end{theorem}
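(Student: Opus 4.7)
The plan is to adapt the proof of Theorem \ref{michelle1} to the slice hyperholomorphic setting, exploiting the fact that $\alpha\in\mathbb R$, so $\overline{\alpha}=\alpha$ and $2\,\mathrm{Re}\,\alpha=2\alpha$. Setting $k=2\alpha$, I would introduce
\[
T=kR_\alpha+I_{\mathcal P},\qquad G=\sqrt{2\pi k}\,C_\alpha,\qquad C=\begin{pmatrix}T\\ G\end{pmatrix},
\]
with $C_\alpha$ point evaluation at the real point $\alpha$. The operators $T,G,C$ act between right-sided quaternionic Pontryagin spaces, and after multiplication by $2\pi$ the inequality \eqref{malpensa190715} rewrites, exactly as in the complex case, as the contraction inequality $I_{\mathcal P}-C^*C\ge 0$.

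I would then invoke the quaternionic Bognár--Krámli factorization from the literature on two-sided quaternionic Pontryagin spaces (see \cite{acs_book,MR3192300}) to obtain a two-sided quaternionic Pontryagin space $\mathcal C_1$ with ${\rm ind}_-(\mathcal C_1)={\rm ind}_-(\mathcal C)$ and operators $F\in\mathbf L(\mathcal C_1,\mathcal P)$, $H\in\mathbf L(\mathcal C_1,\mathcal C)$ with
\[
I_{\mathcal P\oplus\mathcal C}-CC^*=\begin{pmatrix}F\\ H\end{pmatrix}\begin{pmatrix}F\\ H\end{pmatrix}^*,
\]
so that the colligation in \eqref{s-adrs-123} is coisometric. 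Using this colligation in the realization formula \eqref{250715toParis!!!!}, I first define $S$ on a real neighborhood of $\alpha$ (where $\star$-products coincide with ordinary products), and then extend it by slice hyperholomorphy via Remark \ref{extension!!!!} to a slice hypermeromorphic function on $\mathbb H_+$. By the discussion following \eqref{s-adrs-123}, this $S$ lies in $\mathcal S_\kappa(\mathcal C_1,\mathcal C)$ and the kernel $K_S$ associated with \eqref{michelle12345678910} has $\kappa$ negative squares on the domain of slice hyperholomorphy.

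The decisive step is the point evaluation formula. For real $p$ near $\alpha$, the computation in \eqref{mira} goes through verbatim:
\[
(Tf)(p)=\frac{(p+\alpha)f(p)-2\alpha f(\alpha)}{p-\alpha},
\]
and the same substitution as in the proof of Theorem \ref{michelle1} yields
\[
C_p=\frac{2\alpha}{p+\alpha}\,C_\alpha\left(I_{\mathcal P}-\frac{p-\alpha}{p+\alpha}T\right)^{-1}.
\]
Plugging this into the counterpart of \eqref{macau} gives, for real $p,q$ near $\alpha$,
\[
\frac{1-S(p)S(q)^*}{2\pi(p+q)}=C_pC_q^*,
\]
which identifies $K_S(p,q)$ with the reproducing kernel of $\mathcal P$ on the real axis. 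Both sides are real-analytic in $p$ and $q$ separately and admit unique slice hyperholomorphic extensions (in $p$ on the left, and in $\bar q$ on the right, using $\star$-products), so by the Identity Principle the identity \eqref{sarah123456789012} holds throughout $\Omega$, and in fact throughout the domain of slice hyperholomorphy of $S$ in $\mathbb H_+$.

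The main obstacle will be twofold. First, one must carefully manage non-commutativity when passing from the real-variable identities above to slice hyperholomorphic identities on $\mathbb H_+$: the $\star$-inverses in \eqref{250715toParis!!!!} are to be understood via Proposition \ref{formula060813}, and one uses that the relevant building blocks (such as $\tfrac{p-\alpha}{p+\alpha}$ with $\alpha\in\mathbb R$) are quaternionic intrinsic, cf. Remark \ref{rk:intrinsic}, so that compositions and $\star$-products behave well. Second, to obtain that $S$ is slice hypermeromorphic on all of $\mathbb H_+$ when $\mathcal C$ is a genuine Pontryagin space (and not a Hilbert space), one reduces to the Hilbert space situation via the Potapov--Ginzburg transform, which guarantees the existence of a maximal strictly negative $T$-invariant subspace and hence global existence of the $\star$-resolvent. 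Both tools are available in the quaternionic literature cited, making the adaptation a careful but essentially routine translation of the complex argument.
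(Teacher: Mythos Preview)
Your proposal is correct and follows essentially the same approach as the paper: work at real points (where $\alpha\in\mathbb R$ makes all the computations from Theorem~\ref{michelle1} go through verbatim), invoke the quaternionic Bogn\'ar--Kr\'amli factorization to build the coisometric colligation, verify the kernel identity $K_S(t,s)=C_tC_s^*$ for real $t,s$, and then extend by slice hyperholomorphy via Remark~\ref{extension!!!!}. The paper's proof is terser (it simply points to Theorem~\ref{michelle00000} with $z,w$ restricted to be real), but your more explicit route through Theorem~\ref{michelle1} is the same argument, since the half-plane case is the relevant specialization.
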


\begin{proof} The proof follows that of Theorem \ref{michelle00000}, where now $\alpha$ is real and $z=t$ and $w=s$ are assumed real.
Equality \eqref{ineqfund} is still valid here, and so is the factorization \eqref{FH}, see
\cite[Proof of Theorem 7.1, STEP 3 and (7.4), p. 862]{MR3192300}.
We conclude that for $t,s\in\Omega\cap\mathbb R$,
\[
\frac{I_{\mathcal C}-S(t)S(s)^*}{2\pi(t+s)}=C_tC_s^*.
\]
The result follows by slice hyperholomorphic extension of these operator-valued functions; see Remark \ref{extension!!!!}.
\end{proof}
\begin{remark}{\rm
As in the complex case, but now for real and positive $\alpha$ and $\beta$, we have \eqref{equadb2} in the Hardy space of the half-space, that is \eqref{malpensa190715} holds as an equality there. As in Section \ref{subsec2.3} we obtain that the orthogonal of the space
$\mathcal P$ is equal to $M_S\mathbf H_2(\mathbb H_+)$, where now $M_S$ is the operator of $\star$-multiplication by $S$ on the left.
A special case was considered in \cite{acs_trend_1}. Here too, one can consider the spaces $\mathbf H_2(\mathbb H_+, J)$, where $J$ is a signature
matrix. Now $J$ is assumed to have real, rather than complex or quaternionic, coefficients.}
\end{remark}
\subsection{A unified setting}
One can define a unified setting as in the complex plane, but because of the problems arising with composition operators, it is necessary to restrict oneself with functions $a$ and $b$ in the class of intrinsic functions (see Remark \ref{rk:intrinsic}); for functions slice hyperholomorphic in a neighborhood of the origin, this means that their developments in powers of $p$ have only real coefficients.
Specifically, we will consider an open axially symmetric s-domain $\Omega\subseteq\mathbb H$, and a pair of functions $a(p)$ and $b(p)$ quaternionic intrinsic in $\Omega$ such that
\[
\Omega_+=\left\{p\in\Omega\,\,;\,\,|b(p)|<|a(p)|\right\}\quad \text{and}\quad \Omega_-=\left\{p\in\Omega\,\,;\,\,|b(p)|>|a(p)|\right\}
\]
are both nonempty. We also assume that $\Omega_+$ is an s-domain.
\begin{remark}
We note that $\Omega_+$ and $\Omega_-$ are axially symmetric, in fact for any slice hyperholomorphic function we have $f(p)=f(x+Jy)=\alpha(x,y)+J\beta(x,y)$ (see \cite{MR2752913,MR3013643})
and so $|f(p)|$ does not depend on the choice of $J$.
\end{remark}
We then define, for any $f$ slice hyperholomorphic in some open set $\Omega$
\[
\begin{split}
(R(a,b,\alpha)f)(p)&=(a(\alpha)b(p)-b(\alpha)a(p))^{-\star}\star(a(p)f(p)-a(\alpha)f(\alpha))\\
&=(a(\alpha)b(p)-b(\alpha)a(p))^{-1}(a(p)f(p)-a(\alpha)f(\alpha)), \quad \alpha\in\mathbb R.
\end{split}
\]
\begin{remark}{\rm
Easy computations show that \eqref{rarb1} holds also in the quaternionic setting, since $\alpha\in\mathbb R$:
\begin{equation}\label{rarb1q}
\begin{split}
a(\alpha)&R(b,a,\alpha)+b(\alpha)R(a,b,\alpha)
\\
&= a(\alpha)(b(\alpha)a(p)-a(\alpha)b(p))^{-1}(b(p)f(p)-b(\alpha)f(\alpha))\\
&
+b(\alpha)(a(\alpha)b(p)-b(\alpha)a(p))^{-1}(a(p)f(p)-a(\alpha)f(\alpha))\\
&=(b(\alpha)a(p)-a(\alpha)b(p))^{-1}(a(\alpha)b(p)f(p)-a(\alpha)b(\alpha)f(\alpha)-b(\alpha) a(p)f(p)+b(\alpha)a(\alpha)f(\alpha) )\\
&=-f(p).
\end{split}
\end{equation}}
\end{remark}
The following results is the quaternionic counterpart of Proposition \ref{P32}:
\begin{proposition}\label{P32q}
Let $\Omega_+$ be as above and let $\alpha\in\Omega_+$.
A function $S$ is a generalized Schur function if and only if it can be written in the form
\begin{equation}
\label{monique1q}
S(p)=H+\frac{\sigma(p)-\sigma(\alpha)}{1-\sigma(p)\overline{\sigma(\alpha)}}G\star\left(I_{\mathcal P}-\frac{\sigma(p)-\sigma(\alpha)}{1-\sigma(p)\overline{\sigma(\alpha)}}T\right)^{-\star}F,
\end{equation}
where $\mathcal P$ is a Pontryagin space with index of negativity $\kappa$ and where the operator-matrix
\eqref{s-adrs111} is coisometric.
\end{proposition}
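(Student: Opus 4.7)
The strategy mimics the proof of Proposition \ref{P32}: I would reduce the statement to the quaternionic ball case, where the required representation is supplied by \cite[Theorem 7.1]{MR3192300}, using the composition with the intrinsic function $\sigma(p) = b(p) a(p)^{-1}$. Since $a$ and $b$ are assumed quaternionic intrinsic, so is $\sigma$, and composition with an intrinsic function preserves slice hyperholomorphicity (see Remark \ref{rk:intrinsic}); this is exactly what makes the reduction viable in the slice setting.

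First I would establish that $S$ is a generalized Schur function on $\Omega_+$ if and only if there exists an $\mathbf L(\mathcal D,\mathcal C)$-valued generalized Schur function $M$ on the quaternionic unit ball $\mathbb B$, with the same negative index $\kappa$, such that $S(p) = M(\sigma(p))$. On the real axis this follows from the identity
\[
a(t) a(s) - b(t) b(s) = a(t)\bigl(1 - \sigma(t)\sigma(s)\bigr) a(s),
\]
which shows that $K_S(t,s)$ and $K_M(\sigma(t),\sigma(s))$ differ by conjugation with the nonvanishing intrinsic factor $a$; the corresponding identity then extends from the real axis to $\Omega_+$ by slice hyperholomorphic continuation (Remark \ref{extension!!!!}), so the numbers of negative squares coincide.

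Second, I would realize $M$ near the point $u := \sigma(\alpha)$. The natural setting is $\alpha \in \Omega_+ \cap \mathbb R$, in which case $u$ is real (because $a,b$ are intrinsic) and the M\"obius-type map $b_u(p) = (p+u)(1+pu)^{-1}$ is itself quaternionic intrinsic. Thus $M_u(p) := M(b_u(p))$ is a generalized Schur function on $\mathbb B$ that is slice hyperholomorphic in a neighbourhood of the origin, and \cite[Theorem 7.1]{MR3192300} supplies a coisometric colligation $\begin{pmatrix}T & F\\ G & H\end{pmatrix}:\mathcal P\oplus\mathcal D\to\mathcal P\oplus\mathcal C$ such that $M_u(p) = H + p G \star (I_{\mathcal P} - pT)^{-\star} F$. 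Undoing the M\"obius substitution via $b_{-u}\circ b_u = \mathrm{id}$ produces the realization of $M$ centred at $u$, and replacing $p$ by $\sigma(p)$ gives the formula \eqref{monique1q}. The converse direction is immediate: any $S$ of the form \eqref{monique1q} is $M\circ\sigma$ for a ball-generalized-Schur function $M$ coming from a coisometric colligation, and the ball theorem read backwards, combined with the kernel identity above, yields the required negative-square count for $K_S$.

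The main obstacle is the careful bookkeeping of $\star$-products, $\star$-inverses, and compositions in the slice hyperholomorphic category: arbitrary slice-hyperholomorphic composition is not slice hyperholomorphic, and the whole argument depends on all the ``outer'' functions being intrinsic, namely $a$, $b$, $\sigma$, and $b_u$ for real $u$. Once this is checked, every pointwise identity borrowed from the complex proof of Proposition \ref{P32} and from \cite[Theorem 7.1]{MR3192300} transfers first to the real axis, where $\star$-products and compositions reduce to their ordinary counterparts, and then by the Identity Principle for slice hyperholomorphic functions on axially symmetric s-domains to all of $\Omega_+$; the remaining manipulations are as in the complex case.
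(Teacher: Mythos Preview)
Your plan is correct and follows essentially the same route as the paper: reduce to the unit-ball case via $S=M\circ\sigma$ with $\sigma$ intrinsic, apply \cite[Theorem 7.1]{MR3192300} to $M\circ b_u$ at $u=\sigma(\alpha)\in\mathbb R$, and then undo the substitutions. The only cosmetic difference is that the paper constructs $M$ by taking a local intrinsic inverse $\phi$ of $\sigma$ near a real point and setting $M=S\circ\phi$, whereas you justify the correspondence $S\leftrightarrow M$ through the kernel identity on the real axis and slice hyperholomorphic extension; both arguments yield the same $M$ and the same negative-square count, and your explicit restriction to $\alpha\in\Omega_+\cap\mathbb R$ matches what the paper uses implicitly.
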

\begin{proof} The proof closely follows the proof of Proposition \ref{P32}. Let $\sigma(p)=a(p)^{-1}b(p)$ and consider a real point $p_0$ in which $\sigma'(p)$ is nonzero. Then $\sigma$ is quaternionic intrinsic and  one-to-one from a neighborhood $U_{p_0}$ of $p_0$ to $\sigma(U_{p_0})\subseteq\mathbb B$. Without loss of generality, we can assume that $U_{p_0}$ is axially symmetric s-domain (so that also $\sigma(U_{p_0})$ is an axially symmetric s-domain). Let $\phi:\, \sigma(U_{p_0})\to U_{p_0}$ be the inverse of $\sigma$. The function $\phi$ is quaternionic intrinsic, so we can define the function $M(p)=S(\phi(p))$ where $p\in\sigma(U_{p_0})$. Thus $S(p)=M(\sigma(p))$, where $M$ is a  $\mathbf L(\mathcal D,\mathcal C)$-valued  generalized Schur function of the open unit ball which is slice hyperholomorphic in an open set $\Omega_M\subset\mathbb B$ which we can assume to be an axially symmetric s-domain. The kernel $K_M(p,q)=(I-M(p)M(q)^*)\star(1-p\overline{q})^{-\star}$ has the same number $\kappa$ of negative squares as $K_S(p,q)$. This fact can be proved as the analog result in the complex case. Let now
$u\in\mathbb B\cap\mathbb R$ be a point in $\Omega_M$, and let $b_u(p)=(1+p{u})^{-1}(p+u)$. The function $M_u(p)=M(b_u(p))$ is a generalized Schur function of the open unit ball which is slice hyperholomorphic in a neighborhood of the origin. By
\cite[Theorem 7.1, p. 862]{MR3192300}, $M$ can be written as $M(b_u(p))=H+pG\star (I_{\mathcal P}-pT)^{-\star}F$, where the space $\mathcal P$ and the operators
$T,F,G,H$ are as in \eqref{s-adrs-123}. We can take $u=\sigma(\alpha)$ since $S(p)=M(\sigma(p))$ is slice hyperholomorphic in an neighborhood of $\alpha$.
Replacing $p$ by $b_{-\sigma(\alpha)}(p)$ we have
\[
M(p)=H+\frac{p-\sigma(\alpha)}{1-p{\sigma(\alpha)}}G\star\left(I_{\mathcal P}-
\frac{p-\sigma(\alpha)}{1-p{\sigma(\alpha)}}T\right)^{-\star}F,
\]
(note that since $\sigma(\alpha)\in\mathbb R$ we are allowed to write a quotient instead of
$(1-p{\sigma(\alpha)})^{-1}(p-\sigma(\alpha)$).
The result follows by replacing $p$ by $\sigma(p)$.
\end{proof}

Using the notation introduced above, we can then prove the analog of Proposition \ref{P33}:
\begin{proposition}\label{P33q}
Let $S$ be a  $\mathbf L(\mathcal D,\mathcal C)$-valued  generalized Schur function, slice hyperholomorphic in some open subset of $\Omega_+\subset\Omega$, and with associated reproducing kernel Pontryagin space
$\mathcal P(S)$. Then, $\mathcal P(S)$ is $R(a,b,\alpha)$-invariant and the inequality
\begin{equation}
\label{ineq1234567!!!q}
[R(a,b,\alpha)f, R(a,b,\alpha)f]_{\mathcal P(S)}-[R(b,a,\alpha)f, R(b,a,\alpha)f]_{\mathcal P(S)}+[f(\alpha),f(\alpha)]_{\mathcal C}\le0
\end{equation}
holds in it.
\end{proposition}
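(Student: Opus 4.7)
The plan is to mirror the argument of Proposition \ref{P33} in the quaternionic setting, leveraging the fact that $a,b$ are quaternionic intrinsic and that $\alpha$ is real (so that $\sigma(\alpha)\in\mathbb R$), which makes the $\star$-products collapse to ordinary products wherever it matters and allows us to reduce everything to the ball case already treated in \cite{MR3192300}.

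First I would invoke Proposition \ref{P32q} to write $S(p)=M(\sigma(p))$, where $\sigma(p)=a(p)^{-1}b(p)$ is quaternionic intrinsic and $M$ is a generalized Schur function of the open unit ball, slice hyperholomorphic on an axially symmetric s-domain. Because $a$ is intrinsic (hence its inverse multiplies by an ordinary product) and $\sigma$ is intrinsic (so composition is slice hyperholomorphic), the kernel factorization
\[
K_S(p,q)=\frac{1}{a(p)}\,K_M(\sigma(p),\sigma(q))\,\frac{1}{\overline{a(q)}}
\]
yields the identification
\[
\mathcal P(S)=\left\{f(p)=\frac{F(\sigma(p))}{a(p)}\,:\, F\in\mathcal P(M)\right\},\qquad [f,g]_{\mathcal P(S)}=[F,G]_{\mathcal P(M)},
\]
exactly as in \eqref{PSPM}--\eqref{PSPM1}. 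This is the quaternionic counterpart of \eqref{PSPM}; the proof is a direct verification on reproducing kernels and uses only that $a$ is real on the real axis and intrinsic.

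Next I would compute the action of $R(a,b,\alpha)$ and $R(b,a,\alpha)$ on an element $f(p)=F(\sigma(p))/a(p)$. Since $\alpha\in\mathbb R$, the identities
\[
(R(a,b,\alpha)f)(p)=\frac{(R_{\sigma(\alpha)}F)(\sigma(p))}{a(\alpha)\,a(p)}
\]
and, via \eqref{rarb1q},
\[
(R(b,a,\alpha)f)(p)=-\frac{F(\sigma(p))+\sigma(\alpha)(R_{\sigma(\alpha)}F)(\sigma(p))}{a(\alpha)\,a(p)}
\]
transfer verbatim from the complex case, because $a(\alpha),b(\alpha),\sigma(\alpha)$ are real and $a,b$ are intrinsic (so all terms commute and no $\star$-products appear). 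From the first formula and the fact that $\mathcal P(M)$ is $R_{\sigma(\alpha)}$-invariant (which is in turn a consequence of $R_0$-invariance combined with $R_0(I+\sigma(\alpha)R_{\sigma(\alpha)})=R_{\sigma(\alpha)}$, valid because $\sigma(\alpha)\in\mathbb R$), the space $\mathcal P(S)$ is $R(a,b,\alpha)$-invariant.

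Finally, substituting the two displays above into \eqref{ineq1234567!!!q} and multiplying through by $|a(\alpha)|^2$, the inequality to be proved becomes
\[
[R_{\sigma(\alpha)}F,R_{\sigma(\alpha)}F]_{\mathcal P(M)}-[(I_{\mathcal P(M)}+\sigma(\alpha)R_{\sigma(\alpha)})F,(I_{\mathcal P(M)}+\sigma(\alpha)R_{\sigma(\alpha)})F]_{\mathcal P(M)}+[F(\sigma(\alpha)),F(\sigma(\alpha))]_{\mathcal C}\le 0,
\]
which is precisely the quaternionic form \eqref{malpensa!!!!q} of the basic inequality at the real point $\sigma(\alpha)\in\mathbb B\cap\mathbb R$. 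This inequality is obtained from \eqref{malpensa190715!!!!q} (proved in \cite[Theorem 7.1]{MR3192300}) by substituting $(I_{\mathcal P(M)}+\sigma(\alpha)R_{\sigma(\alpha)})F$ in place of $F$ and using once more the resolvent identity $R_0(I+\sigma(\alpha)R_{\sigma(\alpha)})=R_{\sigma(\alpha)}$. The main potential obstacle is purely bookkeeping: one must verify that the intrinsic character of $a,b$ is used at every place where the complex-case proof silently commutes scalars, and that $\sigma(\alpha)$ indeed lies in $\mathbb B\cap\mathbb R$ (guaranteed by $\alpha\in\Omega_+\cap\mathbb R$ and by $\sigma$ being intrinsic). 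Once these points are checked, the calculation reduces to the corresponding one in Proposition \ref{P33} and the conclusion follows.
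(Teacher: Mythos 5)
Your proposal is correct and follows essentially the same route as the paper: the paper's proof simply says that, with $M$ as in Proposition \ref{P32q}, the intrinsic character of $a,b$ together with \eqref{rarb1q} and the validity of \eqref{malpensa!!!!q} at the real point $\sigma(\alpha)$ allow one to repeat the computations of Proposition \ref{P33}, which is exactly what you carry out in detail. The explicit verification that every commutation step in the complex argument is justified by intrinsicness and by $\alpha\in\mathbb R$ is the right bookkeeping and matches the paper's (much terser) argument.
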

\begin{proof}
Let $M$ be as in the proof of the preceding proposition. Then, since $a,b$ are quaternionic intrinsic, and by the validity of
\eqref{malpensa!!!!q} (see e.g. Corollary 8.3.9 in \cite{acs_book}) and of \eqref{rarb1q}, we can repeat all the computations in the proof of Proposition \ref{P33}. \end{proof}

We then have the structure theorem:

\begin{theorem}
Let $\mathcal C$ be a Pontryagin space, and let $\Omega$ be an open axially symmetric s-domain in $\mathbb H$.
Let $\mathcal P$ be a reproducing kernel Pontryagin space of $\mathcal C$-valued functions analytic in $\Omega$, which is $R_0$-invariant and such that \eqref{ineq1234567!!!} holds in $\mathcal P$. Then every elements of $\mathcal P$ has a unique slice hypermeromorphic extension to $\Omega_+$, and there exists a Pontryagin space $\mathcal C_1$ with ${\rm ind}_-(\mathcal C_1)={\rm ind}_-(\mathcal C)$ and a function $S\in\mathcal S_\kappa(\mathcal C_1,\mathcal C)$, with $\kappa={\rm ind}_-(\mathcal P)$, such that the reproducing kernel of the space $\mathcal P$ is of the form \eqref{michelle12345678!!!!!!!!!!!!}.
\label{michelle00000}
\end{theorem}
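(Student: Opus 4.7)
The plan is to mirror the proof of Theorem \ref{michelle00000} in the complex case, using two features specific to the quaternionic setting: the distinguished point $\alpha$ must be chosen in $\mathbb R\cap\Omega_+$, and $a,b$ are quaternionic intrinsic, so that $a(\alpha), b(\alpha)\in\mathbb R$. Together these reduce the purely algebraic part of the argument, performed on the real axis, to the complex calculation of STEPS 1--4 in the proof of the previous Theorem \ref{michelle00000}, while the slice hyperholomorphic content in $\Omega_+$ is recovered by extension from $\Omega\cap\mathbb R$ exactly as in the proof of Theorem \ref{michelle112345}.

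First I would fix $\alpha\in\mathbb R\cap\Omega_+$ at which \eqref{ineq1234567!!!q} is assumed and set
\[
T \;=\; \frac{(|a(\alpha)|^2-|b(\alpha)|^2)R(a,b,\alpha)-b(\alpha)I_{\mathcal P}}{a(\alpha)},\qquad G \;=\; \sqrt{|a(\alpha)|^2-|b(\alpha)|^2}\,C_\alpha,\qquad C \;=\; \begin{pmatrix}T\\ G\end{pmatrix}.
\]
Using \eqref{rarb1q} as in STEP 1 of the complex proof, \eqref{ineq1234567!!!q} becomes $I_{\mathcal P}-C^*C\ge 0$. The index identity \eqref{ineqfund} holds in the quaternionic Pontryagin setting, and combined with the quaternionic Bogn\'ar--Kr\'amli factorization theorem (see \cite{acs_book,MR3192300}) this yields a two-sided quaternionic Pontryagin space $\mathcal C_1$ with $\mathrm{ind}_-(\mathcal C_1)=\mathrm{ind}_-(\mathcal C)$ and operators $F\in\mathbf L(\mathcal C_1,\mathcal P)$, $H\in\mathbf L(\mathcal C_1,\mathcal C)$ satisfying \eqref{FH}, giving a coisometric block matrix $\bigl(\begin{smallmatrix}T&F\\ G&H\end{smallmatrix}\bigr)$. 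I would then define $S$ via the realization formula \eqref{monique1q} (with the $\star$-inverse), which is slice hyperholomorphic in a real neighborhood of $\alpha$ and extends uniquely to a generalized Schur function on $\Omega_+$ by Proposition \ref{P32q}.

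The remaining task is the kernel identity. For $t,s\in\Omega\cap\mathbb R$ the functions $a,b$ take real values and the $\star$-products reduce to ordinary products, so STEPS 2--4 of the complex proof transfer verbatim and give
\[
\frac{I_{\mathcal C}-S(t)S(s)^*}{a(t)a(s)-b(t)b(s)} \;=\; C_t C_s^*,\qquad t,s\in\Omega\cap\mathbb R,
\]
together with the point-evaluation formula analogous to \eqref{cwcz}. To pass from real arguments to arbitrary $p,q\in\Omega_+$ I would apply slice hyperholomorphic extension (Remark \ref{extension!!!!}) separately in each variable, left-slice in $p$ and right-slice in $\bar q$, as at the end of the proof of Theorem \ref{michelle112345}; the resulting extension then matches \eqref{michelle12345678!!!!!!!!!!!!}, equivalently the form \eqref{michelle12345678910}.

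The main obstacle I anticipate is precisely this two-variable extension: one must check that both sides admit the correct one-sided extensions and that these extensions continue to be the point evaluation maps of $\mathcal P$. Once this matching is in place, the Identity Principle forces equality of the two slice hyperholomorphic extensions on $\Omega_+$, and the slice hypermeromorphic extension of every $f\in\mathcal P$ to $\Omega_+$ follows from the corresponding property of $S$ via the realization formula.
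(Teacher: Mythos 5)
Your proposal is correct and follows essentially the same route as the paper: Step 1 via \eqref{rarb1q} and Proposition \ref{P33q}, the quaternionic Bogn\'ar--Kr\'amli factorization to produce $\mathcal C_1$, $F$, $H$, and Steps 2--4 reduced to the complex computation because $\alpha\in\mathbb R\cap\Omega_+$ and $a,b$ are quaternionic intrinsic. The only cosmetic difference is that you establish the kernel identity on $\Omega\cap\mathbb R$ and then slice-extend in each variable (as in the proof of Theorem \ref{michelle112345}), whereas the paper's proof formally replaces $z$ by $p$ throughout; both devices are justified by the same intrinsicness observation.
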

\begin{proof}
Also the proof of this result is obtained by mimicking the arguments to prove Theorem
\ref{michelle00000}. Note that Step 1 can be repeated by virtue of Proposition \ref{P33q}. Since the denominators are quaternionic intrinsic functions and since Proposition \ref{P32q} holds, the computations in Step 2, 3 and 4 can be repeated by formally replacing the complex variable $z$ by the quaternion $p$.
\end{proof}

{\bf Acknowledgments:} It is a pleasure to thank Professors Vladimir Bolotnikov, Aad Dijksma and Jim Rovnyak for comments on an earlier
version of this paper.

\end{document}